\documentclass[11pt,a4paper]{amsart}
\usepackage{mathrsfs}
\usepackage{amssymb,amsmath,mathtools}
\usepackage{hyperref}
 

\DeclareMathOperator{\Tr}{tr}

\DeclareMathOperator{\diag}{diag}
\newcommand{\dd}{\mathrm{d}}
\newcommand{\bE}{\mathbb{E}}

\newcommand{\bt}{\mathbf{t}}
\newcommand{\R}{\mathbb{R}}

\numberwithin{equation}{section}
\newtheorem{theorem}{Theorem}[section]
\newtheorem{lemma}[theorem]{Lemma}
\newtheorem{proposition}[theorem]{Proposition}
\newtheorem{corollary}[theorem]{Corollary}
\newtheorem{definition}[theorem]{Definition}

\newtheorem{remark}[theorem]{Remark}
\newtheorem{example}[theorem]{Example}

\title[AOT between Gaussian processes]{Adapted optimal transport between Gaussian processes in discrete time}

\author{Madhu Gunasingam} 
\address{Department of Statistical Sciences, University of Toronto} 
\email{madhu.gunasingam@mail.utoronto.ca}

\author{Ting-Kam Leonard Wong} 
\address{Department of Statistical Sciences, University of Toronto} 
\email{tkl.wong@utoronto.ca}

\date{\today}
\keywords{Adapted optimal transport, Wasserstein distance, Bures-Wasserstein, Knothe-Rosenblatt, adapted Brenier, Gaussian process}

\begin{document}

\maketitle
\begin{abstract}
We derive explicitly the adapted $2$-Wasserstein distance between non-degenerate Gaussian distributions on $\mathbb{R}^N$ and characterize the optimal bicausal coupling(s). This leads to an adapted version of the Bures-Wasserstein distance on the space of positive definite matrices. 
\end{abstract}

\section{Introduction}
{\it Adapted} {\it optimal transport (AOT)} has emerged to be an appropriate framework for quantifying distributional uncertainty and the sensitivity of stochastic optimization problems in contexts where the flow of information in time plays a crucial role. The key idea, which has several origins in different fields, is to impose a (bi)causal constraint on the feasible couplings. We recall the necessary concepts in Section \ref{sec:AOT} and refer the reader to \cite{BBBE20, BBBW22, BBEP20, BBYZ2017, BBP2021, BPS24, EP24, lassalle2018causal, P24}, and the references therein, for detailed expositions of the theory and the related literature.

Several AOT problems have been explicitly solved. For example, \cite{BBYZ2017, BKR22, R85} provide sufficient conditions for the {\it Knothe-Rosenblatt coupling}, recalled in Section \ref{sec:KR}, to be optimal in discrete time. Also see \cite{BKR22, BT19, lassalle2018causal, RS24} for continuous-time results concerning stochastic differential equations. The main purpose of this paper is to address the adapted $2$-Wasserstein transport between arbitrary non-degenerate Gaussian measures on $\mathbb{R}^N$ which are possibly non-Markovian. To the best of our knowledge, this basic case is still open despite the rapid growth of the subject in recent years. The closest papers we could find are \cite{H23}, which studies a distributionally robust filtering problem with Gaussian noise, and \cite{Z20}, which solves a related OT problem between stationary Gaussian processes. Now we state the main theorem of the paper.

\begin{theorem} \label{thm:main}
	Let $\mu = \mathcal{N}(a, A)$ and $\nu = \mathcal{N}(b, B)$ be non-degenerate Gaussian distributions on $\mathbb{R}^N$. Let $A = LL^{\top}$ and $B = MM^{\top}$ be the Cholesky decompositions of $A$ and $B$ respectively. Then the adapted $2$-Wasserstein distance $\mathcal{AW}_2(\mu, \nu)$ is given by
	\begin{equation} \label{eqn:AW.Gaussian}
		\mathcal{AW}_2^2(\mu, \nu) = \|a - b\|^2 + d_{\mathrm{ABW}}^2(A, B),
	\end{equation}
	where $d_{\mathrm{ABW}}$ is the adapted Bures-Wasserstein distance on the space $\mathscr{S}_{++}(N)$ of (symmetric) positive definite matrices defined by
	\begin{equation} \label{eqn:ABW.Gaussian}
		d_{\mathrm{ABW}}^2(A, B) := \Tr(A) + \Tr(B) - 2 \|\diag(L^{\top}M)\|_1.
	\end{equation}
	Here $\diag(\cdot)$ gives the diagonal of a square matrix, $\|\cdot\|_1$ is the $\ell_1$-norm and $\|\cdot\| = \|\cdot\|_2$ is the Euclidean norm. (See Remark \ref{rmk:techniques}(i) for an extension to a weighted square cost.)
\end{theorem}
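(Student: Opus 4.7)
The plan is to change coordinates through the Cholesky factors so both marginals become standard Gaussians, and then exploit bicausality to kill all off-diagonal cross-correlations. Write $X = a + L\epsilon$ and $Y = b + M\eta$ with $\epsilon, \eta \sim \mathcal{N}(0, I_N)$. Since $L, M$ are invertible lower triangular, each sub-block $L_{1:t,1:t}$ and $M_{1:t,1:t}$ is also invertible, so the natural filtrations satisfy $\sigma(X_{1:t}) = \sigma(\epsilon_{1:t})$ and $\sigma(Y_{1:t}) = \sigma(\eta_{1:t})$. Consequently bicausal couplings of $(\mu, \nu)$ correspond bijectively to bicausal couplings of the two standard Gaussian noises. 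Expanding the quadratic cost,
\[
\bE\|X - Y\|^2 = \|a-b\|^2 + \Tr(A) + \Tr(B) - 2\,\bE[\epsilon^{\top} C \eta], \qquad C := L^{\top} M,
\]
reduces the problem to maximizing $\bE[\epsilon^{\top} C \eta]$ over bicausal couplings of $\epsilon, \eta$.

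The crux of the argument is to show that every such coupling satisfies $\bE[\epsilon_i \eta_j] = 0$ for $i \neq j$. For $i > j$, causality from $\epsilon$ to $\eta$ at time $j$ gives conditional independence of $\eta_{1:j}$ and $\epsilon_{j+1:N}$ given $\epsilon_{1:j}$, so
\[
\bE[\epsilon_i \eta_j \mid \epsilon_{1:j}] = \bE[\epsilon_i \mid \epsilon_{1:j}]\, \bE[\eta_j \mid \epsilon_{1:j}] = 0,
\]
since the coordinates of $\epsilon$ are i.i.d.\ standard normal. The case $i < j$ is symmetric via the other direction of bicausality. Cauchy--Schwarz then bounds each diagonal term by $|\bE[\epsilon_k \eta_k]| \le 1$, so
\[
\bE[\epsilon^{\top} C \eta] = \sum_{k=1}^{N} C_{kk}\, \bE[\epsilon_k \eta_k] \le \sum_{k=1}^{N} |C_{kk}| = \|\diag(L^{\top} M)\|_1,
\]
yielding the lower bound $\mathcal{AW}_2^2(\mu, \nu) \ge \|a-b\|^2 + d_{\mathrm{ABW}}^2(A, B)$.

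To attain this bound I would take the diagonal sign matrix $D = \diag(\sign C_{11}, \ldots, \sign C_{NN})$ and set $\eta := D\epsilon$, equivalently $Y = b + MD\epsilon$. Since $D$ is an orthogonal involution, $\eta \sim \mathcal{N}(0, I_N)$ and the marginal of $Y$ is still $\nu$; since $MD$ is lower triangular, both $X$ and $Y$ are adapted to the single filtration $\sigma(\epsilon_{1:t})$, so the coupling is trivially bicausal. A short computation shows its cost equals exactly $\|a-b\|^2 + \Tr(A) + \Tr(B) - 2\|\diag(L^{\top} M)\|_1$, matching the lower bound.

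I expect the main conceptual step to be the vanishing of the off-diagonal correlations: that is where bicausality and the i.i.d.\ coordinate structure of the latent noises combine decisively. The remainder is a choice of coordinate system and verification of one natural candidate coupling.
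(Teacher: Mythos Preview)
Your argument is correct and takes a genuinely different route from the paper. The paper proves the formula via the dynamic programming principle of Backhoff--Beiglb\"ock--Lin--Zalashko: it sets up the value functions $V_t$ and shows by backward induction that each $V_t$ is quadratic in $(x_{\bt},y_{\bt})$, then identifies the coefficient of $x_{t+1}y_{t+1}$ as a multiple of $(L^\top M)_{t+1,t+1}$, so that the one-step optimizer is the comonotonic or counter-monotonic coupling according to the sign. Your proof instead pushes the whole problem through the Cholesky change of variables to standard Gaussian noises, observes that bicausality together with the independence of the coordinates of $\epsilon$ (resp.\ $\eta$) forces $\bE[\epsilon_i\eta_j]=0$ for $i\neq j$, and then bounds the remaining diagonal sum by Cauchy--Schwarz. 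This is more elementary and entirely avoids the recursion; the DPP approach, on the other hand, delivers as a by-product the explicit value functions and the full characterization of \emph{all} optimal bicausal couplings (Corollary~\ref{cor:optimal.couplings}), which your argument does not immediately give without extracting the equality cases in Cauchy--Schwarz. Both routes produce the same optimizer $Y=b+MD L^{-1}(X-a)$, and both hinge on the fact that the Cholesky factorization turns bicausality into a statement about the standard innovations.
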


In Section \ref{sec:AW} we prove Theorem \ref{thm:main} 
via a {\it dynamic programming principle} from \cite{BBYZ2017} and characterize the set of optimal bicausal couplings (Corollary \ref{cor:optimal.couplings}). The main idea is to observe that the Gaussian assumption and the sum separability of the quadratic cost lead to a tractable linear-quadratic structure of the value function (see \eqref{eqn:V.reduced.form}).  Recently, the authors of \cite{AHP24} built on our techniques to extend Theorem \ref{thm:main} to multivariate Gaussian processes together with entropic regularization. We discuss some geometric properties of the adapted Bures-Wasserstein distance in Section \ref{sec:AW.distance}, and provide illustrative examples in Section \ref{sec:examples}. Since the Gaussian distribution is fundamental in various applications, our explicit solution and its extensions in \cite{AHP24} may improve the understanding of the general theory and stimulate new applications of AOT. Some directions for future research are discussed in Section \ref{sec:conclusion}.

\section{Adapted optimal transport} \label{sec:AOT}
We work with Borel probability measures on $\mathbb{R}^N$, $N \geq 1$, regarded as laws of real-valued stochastic processes indexed by $t \in [N] := \{1, \ldots, N\}$ and equipped with their natural filtrations (implicit in \eqref{eqn:causal} below). We use $\mathcal{P}(E)$ to denote the collection of Borel probability measures on a space $E$. By a {\it coupling} of $(\mu, \nu) \in (\mathcal{P}(\mathbb{R}^N))^2$ we mean an element $\pi \in \mathcal{P}(\mathbb{R}^N \times \mathbb{R}^N)$ whose first and second marginals are $\mu$ and $\nu$ respectively. The set of couplings of $(\mu, \nu)$ is denoted by $\Pi(\mu, \nu)$. By an abuse of terminology we also use the word coupling to mean a pair $(X = (X_t)_{t = 1}^N, Y = (Y_t)_{t = 1}^N)$ of real-valued processes, defined on some probability space, with joint law $\pi \in \Pi(\mu, \nu)$. We write $\mathbb{P}_{\pi}$ for a probability under which $(X, Y) \sim \pi$ (analogous for the expectation $\mathbb{E}_{\pi}$).

For integers $s \leq t$ we let $s:t = (s, s + 1, \ldots, t)$. We also write $\bt = 1:t$ and $\bt' = (t+1):N$. For a quantity $Z$ indexed by $t \in [N]$ we use the shorthand $Z_{s:t} = (Z_s, \ldots, Z_t)$ (by convention $Z_{1:0}$ and $Z_{(N+1):N}$ are empty). Suppose $\mu \in \mathcal{P}(\mathbb{R}^N)$ and $X \sim \mu$. For $t \in [N]$ we let $\mu_t(\cdot) \in \mathcal{P}(\mathbb{R})$ be the marginal distribution of $X_t$. For $x_{1:t} \in \mathbb{R}^t$, we let $\mu_{t+1}(\cdot|x_{1:t}) \in \mathcal{P}(\mathbb{R})$ be the conditional distribution of $X_{t+1}$ given $X_{1:t} = x_{1:t}$, and let $\mu_{\bt'}(\cdot|x_{\bt}) \in \mathcal{P}(\mathbb{R}^{N-t})$ be the conditional distribution of $X_{\bt'}$ given $X_{\bt} = x_{\bt}$. Analogously, we define $\nu_{t+1}(\cdot|y_{\bt})$ and $\nu_{\bt'}(\cdot|y_{\bt})$ for $Y$, as well as $\pi_{t + 1}(\cdot| x_{\bt}, y_{\bt})$ and $\pi_{{\bf t}'}(\cdot| x_{\bt}, y_{\bt})$ for $(X, Y)$. These (regular) conditional distributions are well-defined by the disintegration theorem and are a.s.~unique with respect to suitable reference measures.

Let $\mathcal{P}_2(\mathbb{R}^N) = \{\mu \in \mathcal{P}(\mathbb{R}^N): \int_{\mathbb{R}^N} \|x\|^2 \mu(\dd x) < \infty\}$ be the collection of Borel probability measures on $\mathbb{R}^N$ with finite second moment. For $\mu, \nu \in \mathcal{P}_2(\mathbb{R}^N)$, the usual {\it $2$-Wasserstein distance} $\mathcal{W}_2(\mu, \nu)$ is defined by
\begin{equation} \label{eqn:W2}
	\mathcal{W}_2^2(\mu, \nu) =  \inf_{\pi \in \Pi(\mu, \nu)} \int_{\mathbb{R}^N \times \mathbb{R}^N} \|x - y\|^2 \pi(\dd x \dd y).
\end{equation}
We simply call $\mathcal{W}_2$ the {\it Wasserstein distance} since $2$ is the only order considered in this paper (similar for other quantities such as the Knothe-Rosenblatt distance \eqref{eqn:KR.distance}). When $\mu$ is absolutely continuous, Brenier's theorem asserts that there exists a unique optimal coupling $\pi_{\mathrm{W}}^{\mu, \nu}$ given in terms of the gradient of a convex function \cite{V03}. Let $\mathscr{S}_{++}(N)$ (resp.~$\mathscr{S}_{+}(N)$) be the space of $N$-by-$N$ (symmetric) positive definite (resp.~semipositive definite) matrices. When $\mu = \mathcal{N}(a, A)$ and $\nu = \mathcal{N}(b, B)$ are Gaussian distributions on $\mathbb{R}^N$,\footnote{We write $\mathcal{N}_N(a, A)$ when there is a need to emphasize the dimension.} it is well known that $\mathcal{W}_2^2(\mu, \nu) = \|a - b\|^2 + d_{\mathrm{BW}}^2(A, B)$, where $d_{\mathrm{BW}}$ is the {\it Bures-Wasserstein distance} on $\mathscr{S}_{+}(N)$ defined by 
\begin{equation} \label{eqn:BW}
	d_{\mathrm{BW}}^2(A, B) = \Tr(A) + \Tr(B) - 2 \Tr\big(  A^{\frac{1}{2}} B  A^{\frac{1}{2}} \big)^{\frac{1}{2}}.
\end{equation}
Here $A^{\frac{1}{2}} \in \mathscr{S}_+(N)$ is the unique matrix square root of $A \in \mathscr{S}_+(N)$. See \cite{BJL19, WMP18, T11} for in-depth studies of $d_{\mathrm{BW}}$ from the viewpoints of matrix analysis and Riemannian geometry. When $A, B \in \mathscr{S}_{++}(N)$, $\pi_{\mathrm{W}}^{\mu, \nu}$ is the deterministic coupling under which $Y = T_{\mathrm{W}}^{\mu, \nu}X$, where $X \sim \mu$ and
\begin{equation} \label{eqn:W.OT.map}
	T_{\mathrm{W}}^{\mu, \nu}x = b + A^{-\frac{1}{2}} \big(  A^{\frac{1}{2}} B  A^{\frac{1}{2}} \big)^{\frac{1}{2}} A^{-\frac{1}{2}} (x - a), \quad x \in \mathbb{R}^N,
\end{equation}
is the {\it Brenier map} from $\mu$ to $\nu$. Here and below we use the column convention for $X$, $Y$, $a$, $b$, etc. When $N = 1$ this reduces to the {\it comonotonic coupling} induced by the non-decreasing transport map $F_{\nu}^{-1} \circ  F_{\mu}$, where $F$ denotes the distribution function. For later use, we also recall the {\it counter-monotonic coupling} which has the non-increasing transport map $F_{\nu}^{-1} \circ R \circ F_{\mu}$, where $R(u) = 1 - u$. Note that the Jacobian matrix in \eqref{eqn:W.OT.map} is an element of $\mathscr{S}_{++}(N)$ since $T_{\mathrm{W}}^{\mu, \nu}$ is a convex gradient. Unless it is diagonal, the transport map $T_{\mathrm{W}}^{\mu, \nu}$ ``looks into the future of $X$'' since generally each $Y_t$ is a function of both $X_{\bt}$ and $X_{\bt'}$. The causal condition excludes such couplings.

\begin{definition}[Causal and bicausal couplings] 
	\label{def:causal.coupling}
	Let $\mu, \nu \in \mathcal{P}(\mathbb{R}^N)$. A coupling $\pi \in \Pi(\mu, \nu)$ is said to be causal in the direction $\mu$ to $\nu$, if for all $t \in [N]$ and Borel $B \subset \mathbb{R}^t$ we have
	\begin{equation} \label{eqn:causal}
		\mathbb{P}_{\pi}(Y_{1:t} \in B|X_{1:N}) = \mathbb{P}_{\pi}(Y_{1:t} \in B|X_{1:t}).
	\end{equation}
	We say that $\pi$ is bicausal if it is causal in both directions, and let $\Pi_{bc}(\mu, \nu)$ be the collection of bicausal couplings of $(\mu, \nu)$.
\end{definition}

We cite an equivalent condition, taken from \cite[Proposition 5.1]{BBYZ2017}, for the bicausal property. Given $\pi \in \Pi(\mu, \nu)$,  decompose it as a product of regular conditional distributions:
\begin{equation} \label{eqn:coupling.decomp}
	\begin{split}
		\pi(\dd x \dd y) 
		= \pi_1(\dd x_1 \dd y_1) \pi_2(\dd x_2 \dd y_2|x_1, y_1) 
		\cdots \pi_N(\dd x_{N} \dd y_N|x_{1:(N-1)}, y_{1:(N-1)}).
	\end{split}
\end{equation}
Then $\pi \in \Pi_{bc}(\mu, \nu)$ if and only if $\pi_1 \in \Pi(\mu_1, \nu_1)$ and, for all $1 \leq t \leq N - 1$ and ($\pi$-almost) all $x_{1:t}, y_{1:t} \in \R^t$, we have $\pi_{t+1}(\cdot|x_{1:t}, y_{1:t}) \in \Pi(\mu_{t+1}(\cdot|x_{1:t}), \nu_{t+1}(\cdot|y_{1:t}))$. In \cite{R85} this is called a {\it Markov-construction}. If in \eqref{eqn:W2} we optimize over bicausal couplings, we obtain the {\it adapted Wasserstein distance}.

\begin{definition}[Adapted Wasserstein distance]
	For $\mu, \nu \in \mathcal{P}_2(\mathbb{R}^N)$, the adapted Wasserstein distancec $\mathcal{AW}_2(\mu, \nu)$ is defined by
	\begin{equation} \label{eqn:bicausal.optimal.transport}
		\mathcal{AW}_2^2(\mu, \nu) = \inf_{\pi \in \Pi_{bc}(\mu, \nu)} \int_{\mathbb{R}^N \times \mathbb{R}^N} \|x - y\|^2 \pi(\dd x \dd  y).
	\end{equation}
\end{definition}         

Clearly $\mathcal{AW}_2 \geq \mathcal{W}_2$, and equality holds when $N = 1$. One can show that $\mathcal{AW}_2$ is a metric on $\mathcal{P}_2(\mathbb{R}^N)$; its induced topology is the fundamental {\it weak adapted topology} which has several equivalent characterizations \cite{BBBE20, BLO23, P24}. In this paper we focus on the case where $\mu$ and $\nu$ are Gaussian, i.e., $X$ and $Y$ are Gaussian processes. 



\section{The Knothe-Rosenblatt coupling} \label{sec:KR}
For $\mu, \nu \in \mathcal{P}(\mathbb{R}^N)$, the {\it Knothe-Rosenblatt coupling} $\pi_{\mathrm{KR}}^{\mu, \nu}$, also called the {\it multivariate quantile transform}, defines a bicausal coupling which is known to be $\mathcal{AW}_2$-optimal under a stochastic co-monotonicity condition on $\mu$ and $\nu$ \cite[Proposition 3.4]{BKR22} (also see Remark \ref{rmk:comonotone} below). See \cite{AGK23, BMZ23} for recent applications of Knothe-Rosenblatt transport maps (also called {\it monotone triangular transport maps}) in statistics and machine learning. 

We consider directly the Gaussian case where $\mu = \mathcal{N}(a, A)$, $\nu = \mathcal{N}(b, B)$ with $A, B \in \mathscr{S}_{++}(N)$. It is useful to parameterize $A$ and $B$ in terms of their {\it Cholesky decompositions}. Let $\mathscr{L}_{++}(N)$ (resp.~$\mathscr{L}_+(N)$) be the space of $N$-by-$N$ lower triangular matrices with positive (resp.~non-negative) diagonal entries. Then there exist unique $L, M \in \mathscr{L}_{++}(N)$ such that $A = LL^{\top}$ and $B = MM^{\top}$. It is not difficult to show (see, e.g., \cite[Proposition 2]{L19}) that the Cholesky map $A \mapsto L$ is a diffeomorphism from $\mathscr{S}_{++}(N)$ onto $\mathscr{L}_{++}(N)$. When $A \in \mathscr{S}_+(N)$ there still exists $L \in \mathscr{L}_+(N)$ such that $A = LL^{\top}$, but $L$ is not necessarily unique (an example is given in \eqref{eqn:Cholesky.not.unique}). On some probability space, let $\epsilon \sim \mathcal{N}_N(0, I)$ be an $N$-dimensional standard Gaussian random vector. The {\it Knothe-Rosenblatt coupling} $\pi_{\mathrm{KR}}^{\mu, \nu}$ between $\mu$ and $\nu$ is given by the law of $(X, Y) = (a + L\epsilon, b + M\epsilon)$, and is characterized by the property that each $(\pi_{\mathrm{KR}}^{\mu, \nu})_{t+1}(\cdot|x_{\bt}, y_{\bt})$ is the comonotonic coupling between the univariate Gaussian distributions $\mu_{t+1}(\cdot|x_{\bt})$ and $\nu_{t+1}(\cdot|y_{\bt})$. We may also call $\pi_{\mathrm{KR}}^{\mu, \nu}$ the {\it synchronous coupling} since the same noise sequence $\epsilon$ is used to drive both $X$ and $Y$ as solutions to suitable stochastic difference equations. 

For univariate processes, the Knothe-Rosenblatt coupling coincides with what we call the {\it adapted Brenier coupling} $\pi_{\mathrm{AB}}^{\mu, \nu}$ defined via \eqref{eqn:coupling.decomp} by
\begin{equation} \label{eqn:adapted.Brenier}
	(\pi_{\mathrm{AB}}^{\mu, \nu})_{t+1}(\cdot|x_{\bt}, y_{\bt}) = \pi_{\mathrm{W}}^{\mu_{t+1}(\cdot|x_{\bt}), \nu_{t+1}(\cdot|y_{\bt})}.
\end{equation}
That is, the conditional marginals are coupled by the Brenier map which is $\mathcal{W}_2$-optimal. While the Knothe-Rosenblatt coupling does not have a direct analogue for multivariate processes, the property \eqref{eqn:adapted.Brenier} makes sense in any dimension. In particular, if we consider (absolutely continuous) $\mu, \nu \in \mathcal{P}_2(\mathbb{R}^{d \times N})$ where $d$ is the space dimension and $N$ is the time dimension, then $\pi_{\mathrm{AB}}^{\mu, \nu} = \pi_{\mathrm{KR}}^{\mu, \nu}$ when $d = 1$ and $\pi_{\mathrm{AB}}^{\mu, \nu} = \pi_{\mathrm{W}}^{\mu, \nu}$ when $N = 1$ (in this case $\mathcal{W}_2 = \mathcal{AW}_2$). In this paper we use the terminology of Knothe-Rosenblatt coupling but note that the adapted Brenier coupling is more relevant in the study of multivariate processes.\footnote{This possibility is briefly discussed in \cite[Section 5]{BBYZ2017}.}

Returning to the univariate Gaussian setting, we observe that $\pi_{\mathrm{KR}}^{\mu, \nu}$ is a deterministic coupling under which
\begin{equation} \label{eqn:map.KR}
	Y = T_{\mathrm{KR}}^{\mu, \nu}X, \quad \text{where} \quad T_{\mathrm{KR}}^{\mu, \nu}x = b + ML^{-1}(x - a).
\end{equation}
Since $L, M \in \mathscr{L}_{++}(N)$, we have $ML^{-1} \in \mathscr{L}_{++}(N)$. Following \cite{BPP23}, we define the {\it Knothe-Rosenblatt distance} $\mathcal{KR}_2(\mu, \nu)$ between $\mu$ and $\nu$ by
\begin{equation} \label{eqn:KR.distance}
	\mathcal{KR}_2^2(\mu, \nu) = \int_{\mathbb{R}^N \times \mathbb{R}^N} \|x - y\|^2 \pi_{\mathrm{KR}}^{\mu, \nu}(\dd x \dd y).
\end{equation}
Since the Knothe-Rosenblatt coupling is bicausal, from \eqref{eqn:bicausal.optimal.transport} we have $\mathcal{KR}_2 \geq \mathcal{AW}_2$. The following lemma is a special case of Lemma \ref{lem:correlated.coupling} below.

\begin{lemma}
	For non-degenerate $\mu = \mathcal{N}(a, A = LL^{\top})$ and $\nu = \mathcal{N}(b, B = MM^{\top})$, we have $\mathcal{KR}_2^2(\mu, \nu) = \|a - b\|^2 + d_{\mathrm{KR}}^2(A, B)$, where $d_{\mathrm{KR}}$ defined by
	\begin{equation} \label{eqn:d.KR}
		d_{\mathrm{KR}}^2(A, B) = \Tr(A) + \Tr(B) - 2\Tr(L^{\top}M) = \|L - M\|_{\mathrm{F}}^2
	\end{equation}
	is the Knothe-Rosenblatt distance and $\|L\|_{\mathrm{F}} = \sqrt{\Tr(L^{\top}L)}$ is the Frobenius norm.
\end{lemma}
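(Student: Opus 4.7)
The plan is to exploit the explicit representation of $\pi_{\mathrm{KR}}^{\mu,\nu}$ developed in this section: under this coupling one has $(X, Y) = (a + L\epsilon,\, b + M\epsilon)$ driven by a single noise $\epsilon \sim \mathcal{N}(0, I)$, so $X - Y = (a - b) + (L - M)\epsilon$ is an affine function of $\epsilon$ and the entire calculation reduces to the second moment of a Gaussian.

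First I would write
\[
\mathcal{KR}_2^2(\mu, \nu) = \mathbb{E}\|(a-b) + (L-M)\epsilon\|^2 = \|a-b\|^2 + 2(a-b)^{\top}(L-M)\,\mathbb{E}[\epsilon] + \mathbb{E}\|(L-M)\epsilon\|^2.
\]
Since $\mathbb{E}[\epsilon] = 0$ the cross term drops, and the standard identity $\mathbb{E}\|C\epsilon\|^2 = \Tr(CC^{\top})$ (obtained by expanding $\|C\epsilon\|^2 = \epsilon^{\top}C^{\top}C\epsilon$ and taking expectation) with $C = L - M$ gives $\mathbb{E}\|(L-M)\epsilon\|^2 = \Tr\bigl((L-M)(L-M)^{\top}\bigr) = \|L-M\|_{\mathrm{F}}^2$. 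This already delivers the second equality in \eqref{eqn:d.KR}, and is also consistent with the centering asserted in Lemma \ref{lem:centering} applied to $\mathcal{KR}_2$.

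To obtain the first equality in \eqref{eqn:d.KR}, I would simply expand
\[
\|L-M\|_{\mathrm{F}}^2 = \Tr(LL^{\top}) - \Tr(LM^{\top}) - \Tr(ML^{\top}) + \Tr(MM^{\top}),
\]
use $LL^{\top} = A$ and $MM^{\top} = B$, and apply cyclicity and transpose-invariance of the trace to identify $\Tr(LM^{\top}) = \Tr(ML^{\top}) = \Tr(L^{\top}M)$. Collecting terms yields $\|L - M\|_{\mathrm{F}}^2 = \Tr(A) + \Tr(B) - 2\Tr(L^{\top}M)$, as claimed. There is no genuine obstacle here; the argument is a routine Gaussian second-moment computation. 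The only conceptual takeaway is structural: $d_{\mathrm{KR}}$ is nothing more than the pullback of the Frobenius metric on $\mathscr{L}_{++}(N)$ along the Cholesky diffeomorphism $A \mapsto L$, a perspective that will be useful when comparing $d_{\mathrm{KR}}$ with $d_{\mathrm{ABW}}$ and $d_{\mathrm{BW}}$ in the sequel.
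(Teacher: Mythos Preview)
Your proof is correct and is essentially the same computation the paper performs; the only difference is organizational. The paper proves this lemma as the special case $P = I$ (equivalently $\rho_t \equiv 1$) of Lemma~\ref{lem:correlated.coupling}, whose proof is precisely the Gaussian second-moment expansion you wrote out directly.
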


\begin{corollary} \label{cor:KR.geometry}
	The Cholesky map $A \mapsto L$ is an isometry between the metric spaces  $(\mathscr{S}_{++}(N), d_{\mathrm{KR}})$ and $(\mathscr{L}_{++}(N), \|\cdot\|_{\mathrm{F}})$.
\end{corollary}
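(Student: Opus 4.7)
The plan is extremely short because the preceding lemma already does the entire computational work. To show the Cholesky map $\Phi \colon A \mapsto L$ is an isometry from $(\mathscr{S}_{++}(N), d_{\mathrm{KR}})$ to $(\mathscr{L}_{++}(N), \|\cdot\|_{\mathrm{F}})$, I need exactly two things: bijectivity and distance preservation, both of which are in hand.

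For bijectivity, the paper has already recalled (citing \cite{L19}) that $\Phi$ is a diffeomorphism from $\mathscr{S}_{++}(N)$ onto $\mathscr{L}_{++}(N)$; in particular it is a bijection. For distance preservation, the previous lemma states that for $A = LL^{\top}$ and $B = MM^{\top}$ with $L, M \in \mathscr{L}_{++}(N)$ one has $d_{\mathrm{KR}}^2(A,B) = \|L - M\|_{\mathrm{F}}^2$. Taking square roots gives
\[
d_{\mathrm{KR}}(A,B) = \|\Phi(A) - \Phi(B)\|_{\mathrm{F}},
\]
which is precisely the isometry property. That is all the proof requires.

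There is essentially no obstacle to overcome. The only subtlety worth flagging is that for $d_{\mathrm{KR}}$ to qualify as a metric on $\mathscr{S}_{++}(N)$ in the first place (so that the statement ``isometry between metric spaces'' makes sense), one needs symmetry, non-negativity with identification, and the triangle inequality. These follow automatically from the identity $d_{\mathrm{KR}}(A,B) = \|L-M\|_{\mathrm{F}}$ together with the bijectivity of $\Phi$, since the Frobenius norm already has these properties on $\mathscr{L}_{++}(N)$. Thus the corollary can be stated and proved in a couple of lines.
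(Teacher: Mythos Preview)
Your proposal is correct and matches the paper's approach: the corollary is stated immediately after the lemma with no separate proof, since the identity $d_{\mathrm{KR}}(A,B)=\|L-M\|_{\mathrm{F}}$ together with the already-cited bijectivity of the Cholesky map makes the isometry claim immediate.
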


The next lemma (inspired by the {\it correlated Brownian motion} in \cite{BKR22, BT19}) provides a potential improvement of the Knothe-Rosenblatt coupling and motivates the expression of $d_{\mathrm{ABW}}$ in \eqref{eqn:ABW.Gaussian}. We denote by $\mathscr{P}(N)$ the set of $N$-by-$N$ diagonal matrices $P = (\delta_{st} \rho_t)$ ($\delta_{st}$ is the Kronecker delta) where $\rho_t \in [-1, 1]$ for all $t \in [N]$.

\begin{lemma} \label{lem:correlated.coupling}
	Let $\mu = \mathcal{N}(a, A = LL^{\top})$ and $\nu = \mathcal{N}(b, B = MM^{\top})$ be non-degenerate. For $P \in \mathscr{P}(N)$, let $\pi^P$ be the law of $(X, Y)$ where
	\begin{equation} \label{eqn:pi.P}
		\begin{bmatrix} X \\ Y \end{bmatrix} \sim \mathcal{N}_{2N}\left(\begin{bmatrix} a \\ b \end{bmatrix}, \begin{bmatrix} A & LPM^{\top} \\ MPL^{\top} & B \end{bmatrix} \right).
	\end{equation}
	Then $\pi^{P} \in \Pi_{bc}(\mu, \nu)$. We have $\min_{P \in \mathcal{P}} \bE_{\pi^P}[\|X - Y\|^2] = \|a - b\|^2 + d_{\mathrm{ABW}}^2(A, B)$, where $d_{\mathrm{ABW}}$ is defined by \eqref{eqn:ABW.Gaussian}, and the minimum is attained by $P = (\delta_{st} \rho_t)$ if and only if
	\begin{equation} \label{eqn:rho.sign}
		\rho_t =
		\left\{\begin{array}{ll}
			+1, & \text{if } (L^{\top} M)_{tt} > 0,\\
			-1, & \text{if } (L^{\top} M)_{tt} < 0.
		\end{array}\right.
	\end{equation}
\end{lemma}
\begin{proof}
	Observe that $\pi^P$ is the law of $(X, Y) = (a + L\epsilon^X, b + M\epsilon^Y)$, where $\epsilon^X = \epsilon_{1:N}^X$ and $\epsilon^Y = \epsilon_{1:N}^Y$ are random vectors with
	\begin{equation} \label{eqn:correlated.noise}
		\begin{bmatrix} \epsilon^X \\ \epsilon^Y \end{bmatrix} \sim \mathcal{N}_{2N}\left(\begin{bmatrix} 0 \\ 0 \end{bmatrix}, \begin{bmatrix} I & P \\ P & I \end{bmatrix} \right).
	\end{equation}
	Since $L, M \in \mathscr{L}_{++}(N)$, from the equivalent condition below Definition \ref{def:causal.coupling} we see that $\pi$ is a bicausal coupling of $(\mu, \nu)$ (we recover $\pi_{\mathrm{KR}}^{\mu, \nu}$ when $P = I$). Using \eqref{eqn:correlated.noise}, the transport cost under $\pi^P$ is given by
	\begin{equation*} 
		\begin{split}
			&\bE_{\pi^P}\left[\|X - Y\|^2\right] = \bE_{\pi^P} \left[ \|(a + L \epsilon^X) - (b + M \epsilon^Y)\|^2 \right] \\
			&= \|a - b\|^2 + \bE_{\pi^P}\left[ (\epsilon^X)^{\top} L^{\top} L \epsilon^X + (\epsilon^Y)^{\top} M^{\top} M \epsilon^Y - (\epsilon^X)^{\top} L^{\top} M \epsilon^Y - (\epsilon^Y)^{\top} M^{\top}L \epsilon^X \right]  \\
			&= \|a - b\|^2 + \Tr(A) + \Tr(B) - 2 \sum_{t = 1}^N \rho_t (L^{\top}M)_{tt}.
		\end{split}
	\end{equation*}
	It is clear that to minimize this quantity we should pick $\rho_t$ to match the sign of $(L^{\top} M)_{tt}$. This yields \eqref{eqn:rho.sign} and the desired minimum value.
\end{proof}

In Lemma \ref{lem:correlated.coupling} the correlations $\rho_t$ are deterministic. In principle, we may consider more general bicausal couplings where $\rho_t$ is previsible, i.e., $\rho_t = \rho_t(X_{1:(t-1)}, Y_{1:(t-1)})$. It turns out that this is not necessary in our discrete time Gaussian setting. We also observe that when $(L^{\top} M)_{tt} = 0$ the transport cost  $\bE_{\pi^P}\left[\|X - Y\|^2\right]$  does not depend on $\rho_t$. This suggests that the $\mathcal{AW}_2$-optimal coupling is not unique when $\diag(L^{\top} M)$ contains a zero entry. An explicit example is given in Section \ref{sec:examples}.

\section{Dynamic programming principle} \label{sec:AW}
In this section we prove Theorem \ref{thm:main} and characterize in Corollary \ref{cor:optimal.couplings} the optimal bicausal coupling(s). Our main tool is a {\it dynamic programming principle (DPP)} for bicausal optimal transport which we state only for $\mathcal{AW}_2$.

\begin{theorem}[Dynamic programming principle (Proposition 5.2 of \cite{BBYZ2017}] \label{thm:DPP}
	Given $\mu, \nu \in \mathcal{P}_2(\mathbb{R}^N)$, define the value functions $V_t : \mathbb{R}^t \times \mathbb{R}^t \rightarrow [0, \infty)$, $t = 0, \ldots, N$ (when $t = 0$, $V_0 \in \mathbb{R}$ is a constant), by $V_N(x_{1:N}, y_{1:N}) = \|x_{1:N} - y_{1:N}\|^2$ and, for $t= 0, 1, \ldots, N - 1$,
	\begin{equation} \label{eqn:DPP}
		\begin{split}
			V_t(x_{1:t}, y_{1:t}) = \inf_{\pi_{t+1}} \int_{\R \times \R} V_{t+1}(x_{1:(t+1)}, y_{1:(t+1)}) \pi_{t+1}(\dd x_{t + 1} \dd y_{t+1}),
		\end{split}
	\end{equation}
	where the infimum is over $\pi_{t+1} \in \Pi(\mu_{t+1}(\cdot|x_{\bt}), \nu_{t+1}(\cdot|y_{\bt})) $. Then $V_0 = \mathcal{AW}_2^2(\mu, \nu)$.
\end{theorem}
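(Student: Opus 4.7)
The plan is to prove the DPP by backward induction on $t$, leveraging the Markov-construction characterization of bicausal couplings stated just after \eqref{eqn:coupling.decomp}. That characterization says that $\pi \in \Pi_{bc}(\mu,\nu)$ is in one-to-one correspondence with measurable families of one-step kernels $\pi_{t+1}(\cdot|x_{1:t},y_{1:t}) \in \Pi(\mu_{t+1}(\cdot|x_{1:t}),\nu_{t+1}(\cdot|y_{1:t}))$, so the global optimization in \eqref{eqn:bicausal.optimal.transport} decomposes into an iterated sequence of one-step optimizations of exactly the form \eqref{eqn:DPP}.

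Concretely, for each $t \in \{0,1,\ldots,N\}$ I would introduce the conditional cost-to-go
\[
J_t(x_{1:t},y_{1:t}) := \inf \,\bE_\pi\!\left[\|X-Y\|^2 \,\middle|\, X_{1:t}=x_{1:t},\, Y_{1:t}=y_{1:t}\right],
\]
where the infimum is taken over the tail kernels $\pi_{t+1},\ldots,\pi_N$ with the past kernels $\pi_1,\ldots,\pi_t$ (equivalently, the history $(x_{1:t},y_{1:t})$) held fixed. The goal is to show $J_t = V_t$ for all $t$; taking $t = 0$ then gives $V_0 = \mathcal{AW}_2^2(\mu,\nu)$. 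The base case $t = N$ is the definition $V_N = \|x-y\|^2$. For the inductive step from $t+1$ to $t$, I would condition on $(X_{t+1},Y_{t+1})$ via the tower property to write the conditional expectation as an integral of the finer conditional expectation against $\pi_{t+1}(dx_{t+1},dy_{t+1}|x_{1:t},y_{1:t})$. By the Markov-construction property the choice of $\pi_{t+1}$ is independent of the deeper kernels, so one may first take the infimum over $(\pi_{t+2},\ldots,\pi_N)$ pointwise in $(x_{t+1},y_{t+1})$, yielding $V_{t+1}$ by the inductive hypothesis, and then take the infimum over $\pi_{t+1} \in \Pi(\mu_{t+1}(\cdot|x_{1:t}),\nu_{t+1}(\cdot|y_{1:t}))$, yielding exactly the right-hand side of \eqref{eqn:DPP}.

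The main obstacle is the standard but delicate measurable selection issue: swapping the infimum over the tail with the integral against $\pi_{t+1}$ requires the existence of jointly measurable $\varepsilon$-optimizers, and the recursion \eqref{eqn:DPP} itself only makes sense once one knows that each $V_{t+1}$ is jointly Borel (or at least universally) measurable as a function of $(x_{1:(t+1)},y_{1:(t+1)})$. Both are handled by working within the class of lower semianalytic functions and invoking the analytic measurable selection theorem, using that the set-valued map $(x_{1:t},y_{1:t}) \mapsto \Pi(\mu_{t+1}(\cdot|x_{1:t}),\nu_{t+1}(\cdot|y_{1:t}))$ has analytic graph, which follows from joint measurability of the regular conditional distributions on the Polish spaces involved. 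Since this bookkeeping is the bulk of \cite[Proposition 5.2]{BBYZ2017} and is entirely classical in stochastic control, I would defer to that reference and to the standard measurable selection literature for the technical details rather than rederive them in full.
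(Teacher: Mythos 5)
The paper does not prove this theorem; it is quoted verbatim as Proposition~5.2 of \cite{BBYZ2017}, so there is no in-paper proof to compare against. Your sketch is a sound outline of the standard argument that one would find in that reference (and in the broader stochastic-control literature): backward induction on the cost-to-go $J_t$, using the Markov-construction characterization of $\Pi_{bc}(\mu,\nu)$ to decompose the global bicausal optimization into iterated one-step problems, with the tower property furnishing the inductive step and measurable selection (via lower semianalyticity and the Jankov--von~Neumann theorem) justifying the interchange of infimum and integral and the measurability of each $V_{t+1}$. You correctly identify the measurable-selection bookkeeping as the genuinely nontrivial part and reasonably defer it, which is consistent with this paper's treatment of the result as a black box. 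One small point worth being explicit about if you were to write this out: the one-to-one correspondence between bicausal couplings and measurable families of one-step kernels is not literal, since a given $\pi$ only determines its disintegrations $\pi$-a.e.; what one actually uses is that every such measurable family of kernels glues to a bicausal coupling, and conversely every bicausal coupling admits such a disintegration, which is enough to make the infima agree.
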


The DPP reduces the AOT problem \eqref{eqn:bicausal.optimal.transport} to a sequence of one-dimensional OT problems between the conditional marginals. While the DPP is complicated for generic distributions, the Gaussian assumption allows us to solve \eqref{eqn:DPP} explicitly when the cost is quadratic. In the rest of this section we fix $\mu = \mathcal{N}(a, A = LL^{\top})$ and $\nu = \mathcal{N}(b, B = MM^{\top})$ where $L, M \in \mathscr{L}_{++}(N)$. By an {\it adapted Wasserstein coupling} $\pi_{\mathrm{AW}}^{\mu, \nu}$ of $(\mu, \nu)$, we mean a coupling $\pi^P$ in the context of Lemma \ref{lem:correlated.coupling} where $P \in \mathscr{P}(N)$ satisfies \eqref{eqn:rho.sign}; it is unique if and only if $(L^{\top}M)_{tt} \neq 0$ for all $t \in [N]$. This notion is handy when solving the DPP.  Observe that since $(L^{\top} M)_{NN} = L_{NN} M_{NN} > 0$, in \eqref{eqn:rho.sign} we always have $\rho_{NN} = +1$.

\begin{proposition}[Time consistency] \label{prop.time.consistent}
	For any $t$ and $x_{\bt}, y_{\bt} \in \mathbb{R}^t$ we have 
	\begin{equation*} 
		(\pi_{\mathrm{AW}}^{\mu, \nu})_{\bt'}(\cdot | x_{\bt}, y_{\bt}) = \pi_{\mathrm{AW}}^{\mu_{\bt'}(\cdot|x_{\bt}), \nu_{\bt'}(\cdot|y_{\bt})},
	\end{equation*}
	in the sense that any version of the left hand side is a version of the right hand side.
\end{proposition}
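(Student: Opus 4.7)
The plan is to read off the conditional structure of $\pi_{\mathrm{AW}}^{\mu,\nu}$ directly from its explicit construction $(X,Y) = (a + L\epsilon^X,\, b + M\epsilon^Y)$, exploiting two features: (i) the block-diagonal cross-covariance $P = \diag(\rho_1,\ldots,\rho_N)$ in \eqref{eqn:correlated.noise} makes the pairs $\{(\epsilon^X_s,\epsilon^Y_s)\}_{s \in [N]}$ mutually independent; and (ii) the lower-triangularity of $L$ and $M$ makes the top-left blocks $L_{\bt,\bt}$ and $M_{\bt,\bt}$ invertible. By Lemma \ref{lem:centering} we may assume $a = b = 0$.

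Using the block decomposition of $L$ (pieces $L_{\bt,\bt}$, $L_{\bt',\bt}$, $L_{\bt',\bt'}$; similarly for $M$) and the fact that $L_{\bt,\bt'} = 0$, lower-triangularity yields
\begin{equation*}
X_{\bt} = L_{\bt,\bt}\,\epsilon^X_{\bt}, \qquad X_{\bt'} = L_{\bt',\bt}\,\epsilon^X_{\bt} + L_{\bt',\bt'}\,\epsilon^X_{\bt'},
\end{equation*}
and analogously for $Y$. By (ii), conditioning on $(X_{\bt},Y_{\bt}) = (x_{\bt},y_{\bt})$ is equivalent to conditioning on $(\epsilon^X_{\bt},\epsilon^Y_{\bt}) = (e^X_{\bt}, e^Y_{\bt})$, where $e^X_{\bt} := L_{\bt,\bt}^{-1} x_{\bt}$ and $e^Y_{\bt} := M_{\bt,\bt}^{-1} y_{\bt}$. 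By (i), the future noise $(\epsilon^X_{\bt'},\epsilon^Y_{\bt'})$ is independent of this event and retains the joint Gaussian law \eqref{eqn:correlated.noise} with cross-correlation $\diag(\rho_{t+1},\ldots,\rho_N)$. Therefore a version of $(\pi_{\mathrm{AW}}^{\mu,\nu})_{\bt'}(\cdot|x_{\bt},y_{\bt})$ is the law of $(\tilde{a} + L_{\bt',\bt'}\,\epsilon^X_{\bt'},\ \tilde{b} + M_{\bt',\bt'}\,\epsilon^Y_{\bt'})$, where $\tilde{a} := L_{\bt',\bt} e^X_{\bt}$ and $\tilde{b} := M_{\bt',\bt} e^Y_{\bt}$.

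To identify this with $\pi_{\mathrm{AW}}^{\mu_{\bt'}(\cdot|x_{\bt}),\,\nu_{\bt'}(\cdot|y_{\bt})}$, two checks suffice. First, standard Gaussian conditioning (or direct block multiplication of $LL^\top$) shows that $\tilde{a},\tilde{b}$ are the conditional means and that $L_{\bt',\bt'}, M_{\bt',\bt'} \in \mathscr{L}_{++}(N-t)$ are the Cholesky factors of the conditional covariances $A_{\bt'|\bt}$ and $B_{\bt'|\bt}$. Second, I must verify that $(\rho_{t+1},\ldots,\rho_N)$ satisfies the sign rule \eqref{eqn:rho.sign} for these new Cholesky factors. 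For each $s \in [N-t]$, lower-triangularity gives
\begin{equation*}
(L_{\bt',\bt'}^\top M_{\bt',\bt'})_{ss} = \sum_{r = t+1}^{N} L_{r,\,t+s}\,M_{r,\,t+s} = \sum_{r = t+s}^{N} L_{r,\,t+s}\,M_{r,\,t+s} = (L^\top M)_{(t+s)(t+s)},
\end{equation*}
since $L_{r,t+s} = 0$ for $r < t+s$. Hence $\sign\bigl((L_{\bt',\bt'}^\top M_{\bt',\bt'})_{ss}\bigr) = \rho_{t+s}$, completing the match.

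The argument is essentially linear-algebraic, so the main care required is bookkeeping of block indices. The only genuine subtlety is at diagonal entries where $(L^\top M)_{(t+s)(t+s)} = 0$: the corresponding $\rho_{t+s}$ is then unconstrained, and the adapted Wasserstein coupling is defined only up to this ambiguity. The ``any version'' qualifier in the statement is precisely what accommodates this non-uniqueness on both sides of the identity.
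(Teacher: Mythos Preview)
Your proof is correct and follows essentially the same approach as the paper's: both arguments use the block decomposition of $L$ and $M$, exploit the independence of $(\epsilon^X_{\bt'},\epsilon^Y_{\bt'})$ from the past to identify the conditional law, and then verify that $\diag(L_{\bt',\bt'}^\top M_{\bt',\bt'})$ agrees with the tail of $\diag(L^\top M)$ so that the sign rule \eqref{eqn:rho.sign} is inherited. The paper obtains this last identity via a block computation of $L^\top M$, whereas you compute the diagonal entries directly; these are equivalent, and your explicit remark on the ``any version'' qualifier at zero diagonal entries is a nice addition.
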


\begin{lemma}[Conditional distribution under Cholesky decomposition] \label{lem:Cholesky.conditional}
	Let $\mu = \mathcal{N}(a, A = LL^{\top})$ where $L \in \mathscr{L}_{++}(N)$. Given $t$, write $a = (a_{\bt}, a_{\bt'})$ and let
	\begin{equation} \label{eqn:L.block}
		L = \begin{bmatrix} L_{{\bf t}, {\bf t}} & 0 \\ L_{{\bf t}', {\bf t}} & L_{{\bf t}', {\bf t}'} \end{bmatrix}
	\end{equation}
	be the block diagonal representation of the Cholesky matrix $L$. Then
	\begin{equation} \label{eqn:conditional.Choleskdy}
		\mu_{\bt'}(\cdot|x_{\bt})  = \mathcal{N}_{N-t} ( a_{\bt'} + L_{\bt', \bt} L_{\bt, \bt}^{-1}  (x_{\bt} - a_{\bt}) , L_{\bt', \bt'} L_{\bt', \bt'}^{\top}).%
		\footnote{Here (and below) we mean $L_{\bt', \bt'}^{\top} = (L_{\bt', \bt'})^{\top}$. When $t = 0$ the past $x_{{\bf t}}$ is empty. In this case we regard $L_{\bt', \bt} L_{\bt, \bt}^{-1}  (x_{\bt} - a_{\bt}) = 0$ to simplify the presentation. Similarly, $x_{{\bf t}'}$ is empty when $t = N$.}
	\end{equation}
\end{lemma}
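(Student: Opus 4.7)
My plan is to exploit the representation $X = a + L\epsilon$ with $\epsilon \sim \mathcal{N}_N(0,I)$, which makes the lemma almost immediate. Splitting $\epsilon = (\epsilon_{\bt}, \epsilon_{\bt'})$ into its first $t$ and last $N-t$ coordinates, the block structure \eqref{eqn:L.block} gives
\begin{equation*}
X_{\bt} = a_{\bt} + L_{\bt,\bt}\,\epsilon_{\bt}, \qquad X_{\bt'} = a_{\bt'} + L_{\bt',\bt}\,\epsilon_{\bt} + L_{\bt',\bt'}\,\epsilon_{\bt'}.
\end{equation*}
Because $L \in \mathscr{L}_{++}(N)$, the diagonal block $L_{\bt,\bt}$ is itself lower triangular with strictly positive diagonal and hence invertible, so conditioning on $X_{\bt} = x_{\bt}$ pins down $\epsilon_{\bt} = L_{\bt,\bt}^{-1}(x_{\bt} - a_{\bt})$ as a deterministic quantity. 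Since $\epsilon_{\bt'}$ is independent of $\epsilon_{\bt}$ and distributed as $\mathcal{N}_{N-t}(0, I)$, the conditional law of $X_{\bt'}$ given $X_{\bt} = x_{\bt}$ is the law of
\begin{equation*}
a_{\bt'} + L_{\bt',\bt} L_{\bt,\bt}^{-1}(x_{\bt} - a_{\bt}) + L_{\bt',\bt'}\,\epsilon_{\bt'},
\end{equation*}
which is precisely $\mathcal{N}_{N-t}(a_{\bt'} + L_{\bt',\bt} L_{\bt,\bt}^{-1}(x_{\bt} - a_{\bt}),\, L_{\bt',\bt'} L_{\bt',\bt'}^{\top})$, as claimed.

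As a sanity check one can also derive the result from the standard Gaussian conditioning formulas. Multiplying out the block product $A = LL^{\top}$ yields $A_{\bt,\bt} = L_{\bt,\bt} L_{\bt,\bt}^{\top}$, $A_{\bt',\bt} = L_{\bt',\bt} L_{\bt,\bt}^{\top}$, and $A_{\bt',\bt'} = L_{\bt',\bt} L_{\bt',\bt}^{\top} + L_{\bt',\bt'} L_{\bt',\bt'}^{\top}$. The conditional mean $a_{\bt'} + A_{\bt',\bt} A_{\bt,\bt}^{-1}(x_{\bt}-a_{\bt})$ collapses to $a_{\bt'} + L_{\bt',\bt} L_{\bt,\bt}^{-1}(x_{\bt}-a_{\bt})$ after the two copies of $L_{\bt,\bt}^{\top}$ cancel, and the Schur complement $A_{\bt',\bt'} - A_{\bt',\bt} A_{\bt,\bt}^{-1} A_{\bt,\bt'}$ simplifies to $L_{\bt',\bt'} L_{\bt',\bt'}^{\top}$ for the same reason. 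I would present only the first (noise-driven) argument since it is shorter and makes the triangular structure transparent.

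There is no real obstacle here: the lemma is essentially a recognition statement, and the lower-triangular shape of $L$ is precisely what makes the Cholesky representation compatible with the filtration generated by the coordinates. The only point worth flagging is the invertibility of the diagonal block $L_{\bt,\bt}$, which follows from $L \in \mathscr{L}_{++}(N)$ and is also why one needs non-degeneracy of $\mu$ throughout the section.
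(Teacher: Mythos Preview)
Your argument is correct and matches the paper's proof essentially line for line: the paper also writes $X = a + L\epsilon$, splits into the block equations for $X_{\bt}$ and $X_{\bt'}$, substitutes $\epsilon_{\bt} = L_{\bt,\bt}^{-1}(X_{\bt}-a_{\bt})$, and concludes from the independence of $\epsilon_{\bt'}$ and $X_{\bt}$. Your additional Schur-complement sanity check is not in the paper but is a harmless bonus.
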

\begin{proof}
	Let $X = a + L\epsilon \sim \mu$ where $\epsilon \sim \mathcal{N}_N(0, I)$. In block form, we have
	\begin{equation} \label{eqn:Cholesky.conditional}
		\begin{split}
			X_{\bt} &= a_{\bt} + L_{\bt, \bt} \epsilon_{\bt},\\
			X_{{\bf t}'} &= a_{\bt'} + L_{\bt', \bt} \epsilon_{\bt} + L_{\bt', \bt'} \epsilon_{\bt'} = a_{\bt'} + L_{\bt',\bt} L_{\bt, \bt}^{-1}  (X_{\bt} - a_{\bt}) + L_{\bt', \bt'} \epsilon_{\bt'}.
		\end{split}
	\end{equation}
	Since $\epsilon_{\bt'} \sim \mathcal{N}_{N-t}(0, I)$  and is independent of $X_{
		\bt}$, we immediately obtain \eqref{eqn:conditional.Choleskdy}. Note that the  conditional covariance matrix $L_{\bt', \bt'} L_{\bt', \bt'}^{\top}$ does {\it not} depend on $x_{\bt}$.
\end{proof}

\begin{proof}[Proof of Proposition \ref{prop.time.consistent}]
	Given $t$, consider the block representations
	\[
	L = \begin{bmatrix} L_{\bt, \bt} & 0 \\ L_{\bt', \bt} & L_{\bt', \bt'}\end{bmatrix} \quad \text{and} \quad
	M = \begin{bmatrix} M_{\bt, \bt} & 0 \\ M_{\bt', \bt} & M_{\bt', \bt'}\end{bmatrix}.
	\]
	Under an adapted Wasserstein coupling $\pi_{\mathrm{AW}}^{\mu, \nu}$, write $X = a + L \epsilon^X$ and $Y = b + M \epsilon^Y$ where $(\epsilon^X, \epsilon^Y)$ is distributed as \eqref{eqn:correlated.noise}. From \eqref{eqn:Cholesky.conditional}, we have
	\begin{equation*}
		\begin{split}
			X_{{\bf t}'} = a_{\bt'} + L_{\bt',\bt} L_{\bt, \bt}^{-1}  (X_{\bt} - a_{\bt}) + L_{\bt', \bt'} \epsilon_{\bt'}^X \quad \text{and} \quad
			Y_{{\bf t}'} = b_{\bt'} + M_{\bt',\bt} M_{\bt, \bt}^{-1}  (Y_{\bt} - b_{\bt}) + M_{\bt', \bt'} \epsilon_{\bt'}^Y.
		\end{split}
	\end{equation*}
	Note that $\epsilon^X_{\bt'}, \epsilon^Y_{\bt'}$ are independent of $X_{\bt}, Y_{\bt}$. From \eqref{eqn:correlated.noise}, $(\pi_{\mathrm{AW}}^{\mu, \nu})_{\bt'}(\cdot | x_{\bt}, y_{\bt})$ is the pushforward of
	\[
	\mathcal{N}_{2(N - t)}\left( \begin{bmatrix} 0 \\ 0 \end{bmatrix}, \begin{bmatrix} I & P_{\bt', \bt'} \\ P_{\bt', \bt'} & I \end{bmatrix}\right)
	\]
	under the affine map
	\[
	(\epsilon_{\bt'}^X, \epsilon_{\bt'}^Y) \mapsto 
	(a_{\bt'} + L_{\bt',\bt} L_{\bt, \bt}^{-1}  (x_{\bt} - a_{\bt}) + L_{\bt', \bt'}\epsilon_{\bt'}^X, b_{\bt'} + M_{\bt', \bt} M_{\bt, \bt}^{-1}  (y_{\bt} - b_{\bt}) + M_{\bt', \bt'}\epsilon_{\bt'}^Y).
	\]
	In particular, $(\pi_{\mathrm{AW}}^{\mu, \nu})_{\bt'}(\cdot | x_{\bt}, y_{\bt})$ is a coupling of $(\mu_{\bt'}(\cdot|x_{\bt}), \nu_{\bt'}(\cdot|y_{\bt}))$ and has the form \eqref{eqn:pi.P}. A straightforward computation gives
	\begin{equation*}
		L^{\top} M =
		\left[
		\begin{array}{c|c}
			L_{\bt, \bt}^{\top} M_{\bt, \bt} + L_{\bt', \bt}^{\top} M_{\bt', \bt} & L_{\bt', \bt}^{\top} M_{\bt', \bt'} \\
			\hline
			L_{\bt', \bt'}^{\top} M_{\bt', \bt} & L_{\bt', \bt'}^{\top} M_{\bt', \bt'}
		\end{array}
		\right].
	\end{equation*}
	It follows that $\diag( L^TM)_{\bt'} = \diag (L_{\bt', \bt'}^{\top} M_{\bt', \bt'})$. Hence $P_{\bt', \bt'}$ satisfies \eqref{eqn:rho.sign} for the conditional covariance matrices and we conclude that $(\pi_{\mathrm{AW}}^{\mu, \nu})_{\bt'}(\cdot | x_{\bt}, y_{\bt})$ is an adapted Wasserstein coupling between $\mu_{\bt'}(\cdot|x_{\bt})$ and $\nu_{\bt'}(\cdot|y_{\bt})$.
\end{proof}

\begin{proof}[Proof of Theorem \ref{thm:main}]
	By recentering (i.e., considering $X - a$ and $Y - a$) we may assume $a = b = 0$. From Lemma \ref{lem:Cholesky.conditional}, we have 
	\begin{equation} \label{eqn:conditional.means}
		\begin{split}
			\mathbb{E}_{\mu}[X_{\bt'}|X_{\bt} = x_{\bt}] = L_{\bt', \bt} L_{\bt, \bt}^{-1} x_{\bt} \quad \text{and} \quad
			\mathbb{E}_{\nu}[Y_{\bt'}|Y_{\bt} = y_{\bt}] = M_{\bt', \bt} M_{\bt, \bt}^{-1} y_{\bt}. \\
		\end{split}
	\end{equation}
	
	Let $V_t(x_{1:t}, y_{1:t})$ be the value function in Theorem \ref{thm:DPP}.  We claim that $$V_N(x_{1:N}, y_{1:N}) = \|x_{1:N} - y_{1:N}\|^2$$ (the terminal value) and, for $0 \leq t < N$,
	\begin{equation} \label{eqn:value.function.claim}
		\begin{split}
			&V_t(x_{\bt}, y_{\bt})  \\
			&= \int_{\mathbb{R}^{N-t} \times \mathbb{R}^{N-t}} \|x - y\|^2 \pi_{\mathrm{AW}}^{\mu, \nu} (\dd x_{\bt'} \dd y_{\bt'}|x_{\bt}, y_{\bt})\\
			&= \|x_{\bt} - y_{\bt}\|^2 + \|L_{\bt', \bt} L_{\bt, \bt}^{-1} x_{\bt} - M_{\bt', \bt} M_{\bt, \bt}^{-1} y_{\bt}\|^2
			+ d_{\mathrm{ABW}}^2(L_{\bt', \bt'}L_{\bt', \bt'}^{\top}, M_{\bt', \bt'}M_{\bt', \bt'}^{\top}),
		\end{split}
	\end{equation}
	where the second equality follows from Lemma \ref{lem:correlated.coupling} and the definition of $\pi_{\mathrm{AW}}^{\mu, \nu}$.\footnote{From Lemma \ref{lem:correlated.coupling}, the value of the integral is independent of the version of $\pi_{\mathrm{AW}}^{\mu, \nu}$ used.} If so, the conclusion follows by letting $t = 0$.	
	
	We argue by induction. We use the shorthand $t_{+} = t + 1$ as well as $\bt_{+} = 1:(t + 1)$ and $\bt_{+}' = (t + 2):N$. Suppose \eqref{eqn:value.function.claim} holds for $t + 1$, so that 
	\begin{equation}  \label{eqn:value.function.claim.induction}
		\begin{split}
			V_{t+1}(x_{\bt_{+}}, y_{\bt_{+}}) &=  \|x_{\bt} - y_{\bt}\|^2 + (x_{t + 1} - y_{t + 1})^2 \\
			&\quad + \|L_{\bt_{+}', \bt_{+}} L_{\bt_{+}, \bt_{+}}^{-1} x_{\bt_{+}} - M_{\bt_{+}', \bt_{+}} M_{\bt_{+}, \bt_{+}}^{-1} y_{\bt_{+}}\|^2 \\
			&\quad + d_{\mathrm{ABW}}^2(L_{\bt_{+}', \bt_{+}'}L_{\bt_{+}', \bt_{+}'}^{\top}, M_{\bt_{+}', \bt_{+}'}M_{\bt_{+}', \bt_{+}'}^{\top}).
		\end{split}
	\end{equation}
	Observe that this is a  quadratic function of $x_{t+1}$ and $y_{t+1}$ which appear in the second and third terms. When we integrate it against $\pi_{t+1} \in \Pi(\mu_{t+1}(\cdot|x_{\bt}), \nu_{t+1}(\cdot|y_{\bt}))$, only the integral of the  $x_{t+1}y_{t+1}$ term depends on the choice of the coupling.
	
	To obtain the coefficient of $x_{t+1} y_{t+1}$ we begin by analyzing $L_{\bt_{+}, \bt_{+}}^{-1}$ and $M_{\bt_{+}, \bt_{+}}^{-1}$. Write, in block form,
	\[
	L_{\bt_{+}, \bt_{+}} = 
	\left[
	\begin{array}{c|c}
		L_{\bt, \bt} & 0_{t \times 1} \\
		\hline
		L_{t_{+}, \bt} & L_{t_{+}, t_{+}}
	\end{array}
	\right] \Rightarrow
	L_{\bt_{+}, \bt_{+}}^{-1} = 
	\left[
	\begin{array}{c|c}
		L_{\bt, \bt}^{-1} & 0_{t \times 1} \\
		\hline
		\frac{-1}{L_{t_{+}, t_{+}}} L_{t_{+}, \bt} L_{\bt, \bt}^{-1} & \frac{1}{L_{t_{+}, t_{+}}}
	\end{array}
	\right].
	\]
	It follows that
	\begin{equation} \label{eqn:conditional.mean.coefficients}
		\begin{split}
			L_{\bt_{+}', \bt_{+}} L_{\bt_{+}, \bt_{+}}^{-1} x_{\bt_{+}} &= \begin{bmatrix} L_{\bt_{+}', \bt}  & L_{\bt_{+}', t_{+}} \end{bmatrix}
			\left[
			\begin{array}{c|c}
				L_{\bt, \bt}^{-1} & 0_{t \times 1} \\
				\hline
				\frac{-1}{L_{t_{+}, t_{+}}} L_{t_{+}, \bt} L_{\bt, \bt}^{-1} & \frac{1}{L_{t_{+}, t_{+}}}
			\end{array}
			\right]
			\begin{bmatrix}
				x_{\bt} \\ x_{t_{+}} 
			\end{bmatrix} \\
			&= \left( L_{\bt_{+}', \bt} L_{\bt, \bt}^{-1} - \frac{1}{L_{t_{+}, t_{+}}} L_{\bt_{+}', t_{+}} L_{t_{+}, \bt} L_{\bt, \bt}^{-1} \right)  x_{\bt} + \frac{1}{L_{t_{+}, t_{+}}} L_{\bt_{+}', t_{+}} x_{t_{+}}.
		\end{split}
	\end{equation}
	Similarly, we have 
	\[
	M_{\bt_{+}', \bt_{+}} M_{\bt_{+}, \bt_{+}}^{-1} y_{\bt_{+}} = \left( M_{\bt_{+}', \bt} M_{\bt, \bt}^{-1} - \frac{1}{M_{t_{+}, t_{+}}} M_{\bt_{+}', t_{+}} M_{t_{+}, \bt} M_{\bt, \bt}^{-1} \right) y_{\bt} + \frac{1}{M_{t_{+}, t_{+}}} M_{\bt_{+}', t_{+}} y_{t_{+}}.
	\]
	Combining the above computations, we see that the coefficient of $x_{t+1} y_{t+1}$ in \eqref{eqn:value.function.claim.induction} is $-2\alpha_{t+1}$, where
	\begin{equation*}
		\begin{split}
			\alpha_{t+1} &:= 1 +  \frac{1}{ L_{t_+, t_+}M_{t_+, t_+} } L_{\bt_+', t_+}^{\top} M_{\bt_+', t_+}  = \frac{1}{ L_{t_+, t_+}M_{t_+, t_+} } (L^{\top} M)_{t_+, t_+}.
		\end{split}
	\end{equation*}
	Since $L_{t_+, t_+}M_{t_+, t_+} > 0$, the sign of $\alpha_{t+1}$ is the same as that of $(L^{\top} M)_{t+1, t+1}$ or they are both zero. Completing the squares in \eqref{eqn:value.function.claim.induction}, we have
	\begin{equation} \label{eqn:V.reduced.form}
		V_{t+1}(x_{\bt_{+}}, y_{\bt_{+}}) \ = \|x_{\bt} - y_{\bt}\|^2 + (x_{t+1} - \alpha_{t+1} y_{t+1})^2  + \cdots,
	\end{equation}
	where the omitted terms, when combined, have the form $E_tx_{t+1}^2 + F_ty_{t+1}^2 + G_tx_{t+1} + H_ty_{t+1} + I_t$ for suitable constants which may depend on $x_{1:t}$ and $y_{1:t}$.
	
	Consider now the one-dimensional optimal transport problem
	\begin{equation} \label{eqn:induction.OT}
		\inf_{\pi_{t+1} \in \Pi(\mu_{t+1}(\cdot|x_{\bt}), \nu_{t+1}(\cdot|y_{\bt}))} \int_{\mathbb{R} \times \mathbb{R}} V_{t+1}(x_{\bt_{+}}, y_{\bt_{+}}) \pi(\dd x_{t+1} \dd y_{t+1}),
	\end{equation}
	where $\mu_{t+1}(\cdot|x_{\bt})$ and $\nu_{t+1}(\cdot|y_{\bt})$ are univariate Gaussian distributions. From \eqref{eqn:V.reduced.form}, $\pi_{t+1}$ is optimal for \eqref{eqn:induction.OT} if and only if it is optimal for
	\[
	\inf_{\pi \in \Pi(\mu_{t+1}(\cdot|x_{\bt}), \nu_{t+1}(\cdot|y_{\bt}))} \int_{\mathbb{R} \times \mathbb{R}} (x_{t+1} - \alpha_{t+1} y_{t+1})^2 \pi(\dd x_{t+1} \dd y_{t+1}),
	\]
	which has a quadratic cost. We identify three cases:
	\begin{itemize}
		\item Case 1: $\alpha_{t+1} > 0$ or $(L^{\top} M)_{t+1, t+1} > 0$. The only optimal $\pi_{t+1}$ is the comonotonic coupling between $\mu_{t+1}(\cdot|x_{\bt})$ and $\nu_{t+1}(\cdot|y_{\bt})$.
		\item Case 2: $\alpha_{t+1} < 0$ or $(L^{\top} M)_{t+1, t+1} < 0$. The only optimal $\pi_{t+1}$ is the counter-monotonic coupling between $\mu_{t+1}(\cdot|x_{\bt})$ and $\nu_{t+1}(\cdot|y_{\bt})$.
		\item Case 3: $\alpha_{t+1} = (L^{\top} M)_{t+1, t+1} = 0$. The value of the integral in \eqref{eqn:induction.OT} is independent of the choice of $\pi_{t+1}$. Hence any $\pi_{t+1}$ is optimal.
	\end{itemize}
	By Proposition \ref{prop.time.consistent}, $\pi_{t+1} = \left( \pi_{\mathrm{AW}}^{\mu, \nu} \right)_{t+1}(\cdot|x_{\bt}, y_{\bt})$ is optimal for \eqref{eqn:induction.OT}. Using the induction hypothesis, tower property and Proposition \ref{prop.time.consistent}, we have
	\begin{equation*}
		\begin{split}
			&V_t(x_{\bt}, y_{\bt}) \\
			&= \int V_{t+1}(x_{\bt_{+}}, y_{\bt_{+}}) \left( \pi_{\mathrm{AW}}^{\mu, \nu} \right)_{t+1}(\dd x_{t+1} \dd y_{t+1}|x_{\bt}, y_{\bt}) \\
			&= \int \left( \int \|x - y\|^2 \left( \pi_{\mathrm{AW}}^{\mu, \nu}\right)_{\bt_{+}'}(\dd x_{\bt_{+}'} \dd y_{\bt_{+}'} |x_{\bt_{+}}, y_{\bt_{+}})  \right) \left( \pi_{\mathrm{AW}}^{\mu, \nu} \right)_{t+1}(\dd x_{t+1} \dd y_{t+1}|x_{\bt}, y_{\bt}) \\
			&= \int \|x - y\|^2 \pi_{\mathrm{AW}}^{\mu, \nu} (\dd x_{\bt'} \dd y_{\bt'}|x_{\bt}, y_{\bt}),
		\end{split}
	\end{equation*}
	which is the desired expression and completes the induction. In particular, we may express the value function \eqref{eqn:value.function.claim} as $V_t(x_{\bt}, y_{\bt}) = 
	\|x_{\bt} - y_{\bt}\|^2 + \mathcal{AW}_2^2(\mu_{\bt'}(\cdot|x_{\bt}), \nu_{\bt'}(\cdot|y_{\bt}))$,   with the convention that the last term is $0$ when $t = N$.
\end{proof}

From the proof of Theorem \ref{thm:main} (in particular, the solution to \eqref{eqn:induction.OT}) we obtain the following characterization of the collection of optimal coupling(s).

\begin{corollary}[Characterization of optimal coupling(s)] \label{cor:optimal.couplings}
	Let $\mu = \mathcal{N}(a, A = LL^{\top})$ and $\nu = \mathcal{N}(b, B = MM^{\top})$ be non-degenerate Gaussian distributions on $\mathbb{R}^N$. For $\pi \in \Pi_{bc}(\mu, \nu)$, decompose it as a product of regular conditional distributions as in \eqref{eqn:coupling.decomp}. Then $\pi$ is optimal for $\mathcal{AW}_2(\mu, \nu)$ if and only if for $\pi$-almost all $x_{1:(t-1)}$ and $y_{1:(t-1)}$, $\pi_t(\cdot|x_{1:(t-1)}, y_{1:(t-1)})$ is the comonotonic (resp.~counter-monotonic) coupling between $\mu_t(\cdot|x_{1:(t-1)})$ and $\nu_t(\cdot|x_{1:(t-1)})$ when $(L^TM)_{tt} > 0$ (resp.~$(L^TM)_{tt} < 0$). The optimal coupling is unique if and only if $(L^{\top}M)_{tt} \neq 0$ for all $t$. In this case it is deterministic and is given by
	\begin{equation} \label{eqn:AW.OT.map}
		Y = T_{\mathrm{AW}}^{\mu, \nu} X, \quad T_{\mathrm{AW}}^{\mu, \nu}(x) = b + MPL^{-1}(x - a),
	\end{equation}
	where $P$ is the diagonal matrix with $P_{st} = \delta_{st}  \rho_t$ satisfying \eqref{eqn:rho.sign}. In all cases (any version of) $\pi_{\mathrm{AW}}^{\mu, \nu}$, under which $(X, Y)$ is jointly Gaussian (possibly degenerate), is optimal.
\end{corollary}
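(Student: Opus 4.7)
The plan is to extract the characterization directly from the proof of Theorem \ref{thm:main}, reading off the necessary and sufficient conditions at each stage of the dynamic programming recursion. Recall from \eqref{eqn:V.reduced.form} and the subsequent case analysis that, after completing the square, the stage-$t$ DPP subproblem \eqref{eqn:induction.OT} reduces to minimizing $\int (x_{t+1} - \alpha_{t+1} y_{t+1})^2 \,\pi_{t+1}(\dd x_{t+1}, \dd y_{t+1})$ over $\pi_{t+1} \in \Pi(\mu_{t+1}(\cdot | x_{\bt}), \nu_{t+1}(\cdot | y_{\bt}))$, where $\alpha_{t+1}$ has the same sign as $(L^{\top}M)_{t+1,t+1}$, and where both marginals are non-degenerate univariate Gaussians, hence atomless.

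For the \emph{if} direction, suppose $\pi \in \Pi_{bc}(\mu, \nu)$ has conditional kernels of the stated form. By the Markov-construction characterization of bicausality (stated below Definition \ref{def:causal.coupling}), $\pi$ is a valid bicausal coupling; by the case analysis in the proof of Theorem \ref{thm:main}, each $\pi_t(\cdot|x_{1:(t-1)}, y_{1:(t-1)})$ realizes the optimal value of the corresponding DPP subproblem (and when $(L^{\top}M)_{tt} = 0$ any coupling does so). Integrating backwards through \eqref{eqn:DPP} yields $\int \|x-y\|^2 \dd\pi(x,y) = V_0 = \mathcal{AW}_2^2(\mu, \nu)$, so $\pi$ is optimal.

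For the \emph{only if} direction, let $\pi$ be any $\mathcal{AW}_2$-optimizer. A standard disintegration argument applied to the DPP shows that for each $t$, $\pi_t(\cdot|x_{1:(t-1)}, y_{1:(t-1)})$ must be a minimizer of the corresponding subproblem \eqref{eqn:induction.OT} for $\pi$-a.a.\ conditioning values (otherwise one could strictly lower the total cost by swapping in the optimal kernel on a set of positive measure). For atomless univariate marginals, the one-dimensional OT problem with cost $(x - \alpha y)^2$ has a unique minimizer when $\alpha \neq 0$, namely the comonotonic coupling when $\alpha > 0$ and the counter-monotonic coupling when $\alpha < 0$ (classical Fr\'echet--Hoeffding theory), while when $\alpha = 0$ the cost is independent of $\pi_{t+1}$. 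Combined with $\sign(\alpha_{t+1}) = \sign((L^{\top}M)_{t+1,t+1})$, this yields the stated characterization.

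Uniqueness and the deterministic form then follow readily. If $(L^{\top}M)_{tt} \neq 0$ for every $t$, the conditional kernels are uniquely determined and so is $\pi$ by the Markov construction; otherwise, in any stage where $(L^{\top}M)_{tt} = 0$ we may substitute any coupling of the univariate conditional Gaussian marginals (e.g.\ replacing comonotonic by counter-monotonic) to produce distinct optimizers. To derive \eqref{eqn:AW.OT.map}, note that in the unique case the resulting coupling is precisely $\pi_{\mathrm{AW}}^{\mu,\nu}$, that is, $(X, Y) = (a + L\epsilon, b + MP\epsilon)$ with $\epsilon \sim \mathcal{N}(0, I)$ and $P = \diag(\rho_1, \ldots, \rho_N)$; solving $\epsilon = L^{-1}(X - a)$ and substituting yields $Y = b + MPL^{-1}(X - a)$. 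The main technical subtlety is the disintegration step in the \emph{only if} direction, which requires care with measurability and null sets but is routine given the regularity of the Gaussian conditional kernels established in Section~\ref{sec:AOT}.
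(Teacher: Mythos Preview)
Your proposal is correct and takes essentially the same approach as the paper: the paper does not give a separate proof of this corollary but simply states that it follows ``from the proof of Theorem \ref{thm:main} (in particular, the solution to \eqref{eqn:induction.OT})'', and you have accurately spelled out the details implicit in that remark.
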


\begin{remark} \label{rmk:techniques}  
	\begin{itemize}
		\item[(i)] The proof of Theorem \ref{thm:main} can be extended easily to {\it weighted} square losses of the form $c(x_{1:N}, y_{1:N}) = \sum_{t = 1}^N w_t (x_t - y_t)^2$ where $w_t > 0$. Let $W = (\delta_{st} w_t)$ be diagonal. Then the optimal coefficient $\rho_t$ becomes the sign of $(L^{\top} W M)_{tt}$, and the value of the bicausal OT problem $\inf_{\pi \in \Pi_{bc}(\mu, \nu)} \int_{\mathbb{R}^N \times \mathbb{R}^N} c(x,y) \pi(\dd x \dd  y)$ becomes
		\[
		(a - b)^{\top} W (a - b) + \Tr(L^{\top} W L) + \Tr(M^{\top} W M) - 2 \| \diag (L^{\top} W M) \|_1.
		\]
		\item[(ii)] Our techniques depend heavily on the Gaussian assumption and the linear-quadratic structure of the value function. As shown recently in \cite{AHP24}, this structure extends to the multivariate setting with entropic regularization. While it may be possible to derive the optimal couplings for other distributions such as {\it elliptical distributions} \cite{CHS81}, in this paper we focus on the Gaussian case for concreteness and tractability.\footnote{We thank Ludger R\"{u}schendorf for his suggestion to consider elliptical distributions. Here we note that although the conditional distribution of an elliptical distribution remains elliptical and the conditional mean vector and scale matrix are analogous to those in \eqref{eqn:conditional.Choleskdy}, the conditional covariance matrix depends on the conditioning value unless the distribution is Gaussian (see \cite[page 36]{M82}). So Theorem \ref{thm:main} and Corollary \ref{cor:KR.optimal} do not extend trivially.}
	\end{itemize}
\end{remark}

We end this section with a necessary and sufficient condition for the Knothe-Rosenblatt coupling between non-degenerate Gaussian distributions to be $\mathcal{AW}_2$-optimal.

\begin{corollary} \label{cor:KR.optimal}
	The Knothe-Rosenblatt coupling $\pi_{\mathrm{KR}}^{\mu, \nu}$ is $\mathcal{AW}_2$-optimal  if and only if $(L^{\top}M)_{tt} \geq 0$ for all $t$. In particular, given $A \in \mathscr{S}_{++}(N)$, there exists an open neighborhood $\mathcal{U}$ of $A$ in $\mathscr{S}_{++}(N)$ (under the Euclidean topology) such that when $B \in \mathcal{U}$ then $\pi_{\mathrm{KR}}^{\mu, \nu}$ is optimal for $\mathcal{AW}_2(\mu, \nu)$.
\end{corollary}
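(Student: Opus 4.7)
The plan is to derive both parts directly from Corollary \ref{cor:optimal.couplings} together with a short continuity argument; no essentially new computation is required.

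For the equivalence, I would observe that $\pi_{\mathrm{KR}}^{\mu,\nu}$ is the special case of the correlated-noise construction \eqref{eqn:correlated.coupling} in which $\rho_t = +1$ for every $t$, so that at each step the conditional kernel between the two univariate Gaussians $\mu_{t}(\cdot|x_{1:(t-1)})$ and $\nu_{t}(\cdot|y_{1:(t-1)})$ is the comonotonic one. Corollary \ref{cor:optimal.couplings} classifies the $\mathcal{AW}_2$-optimal bicausal couplings by requiring comonotonic kernels at indices with $(L^\top M)_{tt} > 0$, counter-monotonic kernels at $(L^\top M)_{tt} < 0$, and arbitrary kernels at $(L^\top M)_{tt} = 0$. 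Hence $\pi_{\mathrm{KR}}^{\mu,\nu}$ satisfies this criterion if and only if no diagonal entry is strictly negative, i.e.\ $(L^\top M)_{tt} \geq 0$ for all $t$. As a sanity check one can compare costs directly: Lemma \ref{lem:correlated.coupling} gives $\mathcal{KR}_2^2(\mu,\nu) = \|a-b\|^2 + \Tr(A) + \Tr(B) - 2\sum_t (L^\top M)_{tt}$, while Theorem \ref{thm:main} gives $\mathcal{AW}_2^2(\mu,\nu) = \|a-b\|^2 + \Tr(A) + \Tr(B) - 2\sum_t |(L^\top M)_{tt}|$, and the two coincide precisely when every $(L^\top M)_{tt}$ is nonnegative.

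For the neighborhood statement, I would specialize to $B = A$: the uniqueness of the Cholesky factorization on $\mathscr{S}_{++}(N)$ forces $M = L$, so $(L^\top M)_{tt} = (L^\top L)_{tt} = \sum_{i \geq t} L_{it}^2 \geq L_{tt}^2 > 0$ for each $t \in [N]$. By Corollary \ref{cor:KR.geometry} (or directly by the smoothness of the Cholesky map on $\mathscr{S}_{++}(N)$), the assignment $B \mapsto M$ is continuous, so each entry $B \mapsto (L^\top M)_{tt}$ is continuous as well. Taking the (finite) intersection of the open sets on which these entries remain strictly positive produces an open neighborhood $\mathcal{U}$ of $A$ in $\mathscr{S}_{++}(N)$ on which the criterion of the first part is satisfied, hence on which $\pi_{\mathrm{KR}}^{\mu,\nu}$ is $\mathcal{AW}_2$-optimal.

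There is no real obstacle in the argument, since Corollary \ref{cor:optimal.couplings} has already done the heavy lifting; the only point requiring care is articulating that the KR kernels really are comonotonic (which is immediate because driving the two Gaussians by a common standard normal is precisely the comonotonic coupling between univariate Gaussians with positive scale factors), and that the Cholesky factor depends continuously on $B$ on $\mathscr{S}_{++}(N)$.
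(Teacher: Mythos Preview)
Your proposal is correct and follows essentially the same approach as the paper: the first statement is read off Corollary~\ref{cor:optimal.couplings}, and the second follows from $(L^\top L)_{tt}>0$ together with continuity of the Cholesky map $B\mapsto M$ on $\mathscr{S}_{++}(N)$. Your write-up is more detailed (the explicit formula $(L^\top L)_{tt}=\sum_{i\ge t}L_{it}^2\ge L_{tt}^2>0$ and the direct cost comparison are nice sanity checks), but the logical skeleton is the same as the paper's three-line proof.
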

\begin{proof}
	The first statement is immediate from Corollary \ref{cor:optimal.couplings}. The second statement follows from the following facts: (i) $(L^{\top} L)_{tt} > 0$ for all $t$; (ii) the map $M \in \mathscr{L}_{++}(N) \mapsto \diag(L^{\top}M) \in \mathbb{R}^N$ is continuous; and (iii) the map $M \mapsto MM^{\top}$ is a diffeomorphism from $\mathscr{L}_{++}(N)$ onto $\mathscr{S}_{++}(N)$.
\end{proof}

\begin{remark} \label{rmk:comonotone}
	It was proved in \cite[Proposition 3.4]{BKR22} that whenever $\mu, \nu \in \mathcal{P}_2(\mathbb{R}^N)$ satisfy a {\it stochastic co-monotonicity condition} the Knothe-Rosenblatt coupling is $\mathcal{AW}_2$-optimal. Even for Gaussian (and Markovian) distributions, our necessary and sufficient condition in Corollary \ref{cor:KR.optimal} is not equivalent to the stochastic co-monotonicity condition. We illustrate this in Example \ref{eg:nonunique}.
\end{remark}

\section{Adapted Wasserstein geometry of Gaussian distributions} \label{sec:AW.distance}
We discuss briefly the adapted Wasserstein geometry of univariate Gaussian processes, and leave a deeper study, in the spirits of \cite{BBEP20, BBP2021, BJL19, T11}, to future research. Let $\mathscr{G}(N) = \{\mathcal{N}(a, A): a \in \mathbb{R}^N, A \in \mathscr{S}_+(N)\}$ be the space of (possibly degenerate) Gaussian distributions on $\mathbb{R}^N$.

It is known that $(\mathcal{P}_2(\mathbb{R}^N), \mathcal{AW}_2)$, unlike $(\mathcal{P}_2(\mathbb{R}^N), \mathcal{W}_2)$, is {\it not} a complete metric space when $N \geq 2$. Its completion can be identified with the space of {\it nested distributions} \cite[Section 4]{BBEP20} or the space of {\it filtered processes} \cite{BBP2021}. Here, we observe that the subspace $(\mathscr{G}(N), \mathcal{AW}_2)$ of Gaussian distributions is still incomplete.  

\begin{proposition}[Incompleteness] \label{prop:incomplete}
	$(\mathscr{G}(N), \mathcal{AW}_2)$ is not complete when $N \geq 2$.
\end{proposition}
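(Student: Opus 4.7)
The plan is to exhibit an explicit Cauchy sequence in $\mathscr{G}(N)$ that fails to converge, exploiting the fact that a vanishing perturbation of one coordinate can still carry full information about a later coordinate; this is exactly what the adapted topology detects but the weak topology does not.

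First I would handle $N = 2$ and reduce the general case to it at the end. Define $\mu_n = \mathcal{N}(0, A_n) \in \mathscr{G}(2)$ to be the law of $(\epsilon/n, \epsilon)$ where $\epsilon \sim \mathcal{N}(0, 1)$, so $A_n$ is rank-one with $(A_n)_{11} = 1/n^2$, $(A_n)_{12} = 1/n$, $(A_n)_{22} = 1$. To show $(\mu_n)$ is Cauchy, I would construct the synchronous coupling on a common probability space, $(X^{(n)}, X^{(m)}) = (\epsilon/n, \epsilon, \epsilon/m, \epsilon)$. Bicausality is immediate from the criterion after Definition \ref{def:causal.coupling}, since $X_1^{(n)}$ and $X_{1:2}^{(n)}$ generate the same $\sigma$-algebra (both equivalent to $\sigma(\epsilon)$), and likewise on the $m$-side, so conditioning on a prefix or the full path produces identical kernels. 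The cost is $\mathbb{E}[(1/n-1/m)^2 \epsilon^2 + 0] = (1/n - 1/m)^2 \to 0$.

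Next, I would identify the only possible limit. Since $\mathcal{AW}_2 \geq \mathcal{W}_2$, the sequence is also $\mathcal{W}_2$-Cauchy, and a direct computation (or convergence of means and covariance matrices) shows $\mu_n \to \mu_\infty := \delta_0 \otimes \mathcal{N}(0, 1) \in \mathscr{G}(2)$ in $\mathcal{W}_2$; hence any $\mathcal{AW}_2$-limit must equal $\mu_\infty$. The heart of the proof is then showing $\mathcal{AW}_2(\mu_n, \mu_\infty) \not\to 0$. For any $\pi \in \Pi_{bc}(\mu_n, \mu_\infty)$, since $Y_1 = 0$ a.s., one has $\mathbb{P}_\pi(X_1 \in \cdot \mid Y_1) = (\mu_n)_1$; the causal condition in the direction $\nu \to \mu$ then forces $\mathbb{P}_\pi(X_1 \in \cdot \mid Y_{1:2}) = (\mu_n)_1$, i.e., $X_1 \perp_\pi Y_2$. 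Under $\mu_n$ we have the structural identity $X_2 = n X_1$, so this propagates to $X_2 \perp_\pi Y_2$, yielding $\mathbb{E}_\pi[(X_2 - Y_2)^2] = \mathrm{Var}(X_2) + \mathrm{Var}(Y_2) = 2$. Therefore $\mathcal{AW}_2^2(\mu_n, \mu_\infty) \geq 1/n^2 + 2 \to 2 > 0$, and $(\mu_n)$ does not converge in $(\mathscr{G}(2), \mathcal{AW}_2)$.

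For $N > 2$, I would pad by independent zero coordinates: let $\tilde{\mu}_n$ be the law of $(\epsilon/n, \epsilon, 0, \ldots, 0) \in \mathbb{R}^N$. Since the coordinates $t \geq 3$ are deterministic, they add nothing to the cost or to the bicausal constraint, and both the Cauchy estimate and the lower bound of Step 3 carry over verbatim. The main obstacle, as always for incompleteness of $\mathcal{AW}_2$, is Step 3: correctly extracting the independence requirement $X_1 \perp_\pi Y_2$ from bicausality against a candidate limit with a degenerate first marginal, and then using the rigid linear link $X_2 = nX_1$ to turn it into a uniform positive lower bound on the transport cost.
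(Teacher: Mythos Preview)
Your proof is correct and takes a genuinely different route from the paper's. The paper works with \emph{non-degenerate} Gaussians: for each $\theta\in(0,\pi)$ it perturbs a common singular Cholesky factor $L(\theta)$ (all giving the same degenerate limit $A=\mathrm{diag}(0,1)$) by $\frac{1}{n}I$, obtains Cauchy sequences $\mu_n(\theta)$ via the bound $\mathcal{AW}_2\le\mathcal{KR}_2=\|L_n(\theta)-L_m(\theta)\|_{\mathrm F}$, and then uses the explicit formula of Theorem~\ref{thm:main} to show $\mathcal{AW}_2(\mu_n(\theta),\mu_n(\theta'))\not\to 0$ for suitable $\theta\ne\theta'$, contradicting convergence to a common limit. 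Your argument instead uses a single sequence of rank-one Gaussians and bounds $\mathcal{AW}_2(\mu_n,\mu_\infty)$ from below by a bare-hands bicausality computation: the triviality of $\sigma(Y_1)$ under the candidate limit forces $X_1\perp_\pi Y_{1:2}$, and the deterministic link $X_2=nX_1$ propagates this to $X_2\perp_\pi Y_2$, giving cost $\ge 2$. This is more self-contained, since it does not invoke Theorem~\ref{thm:main} (which is only stated for non-degenerate marginals anyway), and it isolates very cleanly the mechanism---information about $X_2$ carried through an asymptotically vanishing $X_1$---that the adapted topology detects. The paper's approach, on the other hand, illustrates how the non-uniqueness of Cholesky factors in the degenerate case produces distinct adapted limits of non-degenerate Gaussians, tying the incompleteness back to the main formula.
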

\begin{proof}
	We consider the case $N = 2$ which can be extended straightforwardly. For $0 < \theta < \pi = 3.1415\ldots$, let
	\begin{equation} \label{eqn:Cholesky.not.unique}
		L(\theta) = \begin{bmatrix} 0 & 0 \\ \cos \theta & \sin \theta \end{bmatrix} \in \mathscr{L}_+(2) \Rightarrow L(\theta) L(\theta)^{\top} = A := \begin{bmatrix} 0 & 0 \\ 0 & 1 \end{bmatrix} \in \mathscr{S}_+(2).
	\end{equation}
	For $n \geq 1$, let $L_n(\theta) = L(\theta) + \frac{1}{n} I \in \mathscr{L}_{++}(2)$ and $A_n(\theta) = L_n(\theta) L_n(\theta)^{\top} \in \mathscr{S}_{++}(2)$. Let $\mu_n(\theta) = \mathcal{N}_2(0, A_n(\theta))$. Since
	\[
	\mathcal{AW}_2^2(\mu_n(\theta), \mu_m(\theta)) \leq \mathcal{KR}_2^2(\mu_n(\theta), \mu_m(\theta)) = \|L_n(\theta) - L_m(\theta)\|_{\mathrm{F}}^2 = 2 \left(\frac{1}{n} - \frac{1}{m}\right)^2, 
	\]
	the sequence $\{\mu_n(\theta)\}_{n \geq 1}$ is Cauchy in $(\mathcal{P}_2(\mathbb{R}^N), \mathcal{AW}_2)$. Since $\mathcal{W}_2 \leq \mathcal{AW}_2$, for all $\theta$ the only possible limit $\mu$ is the limit based on weak convergence, i.e., $\mu = \mathcal{N}(0, A)$. But unless $\theta = \theta'$ we have
	\[
	\lim_{n \rightarrow \infty} \mathcal{AW}_2^2(\mu_n(\theta), \mu_n(\theta')) = 2 - 2(|\cos \theta \cos \theta'| + \sin \theta \sin \theta') > 0,
	\]
	which contradicts the existence of a limit.
\end{proof}

We explain the probabilistic intuition of the construction in the proof of Proposition \ref{prop:incomplete}. Write $X = L(\theta) \epsilon$, so that $X_1 = 0$ and $X_2 = (\cos \theta) \epsilon_1 + (\sin \theta) \epsilon_2$. If we use the natural filtration $\mathcal{F}^X$ of $X$, $X_1 = 0$ contains no information about $X_2$. However, only $\epsilon_2$ is independent of $\mathcal{F}_1^{\epsilon}$ and the ``angle'' $\theta$ controls the relative exposure. Although $\mathcal{F}^X = \mathcal{F}^{\epsilon}$ when $X = L_n(\theta) \epsilon$, the equality between the two filtrations is not preserved in the limit.

Next we consider geodesics.\footnote{A curve $(\gamma_t)_{0 \leq t \leq 1}$ is said to be a (constant-speed) {\it geodesic} on a metric space $(S, d)$ if $d(\gamma_s, \gamma_t) = |s - t| d(\gamma_0, \gamma_1)$ for all $s, t \in [0, 1]$.} It is well known that the unique geodesic $(\mu_t)_{0 \leq t \leq 1}$ between non-degenerate $\mu_0 = \mathcal{N}(a_0, A_0)$ and $\mu_1 = \mathcal{N}(a_1, A_1)$ in $(\mathcal{P}_2(\mathbb{R}^N), \mathcal{W}_2)$ is given by McCann's {\it displacement interpolation} \cite{M97}: $\mu_t = \mathcal{N}(a_t, A_t)$, where $a_t=(1 - t)a_0 + ta_1$,
\begin{equation} \label{eqn:matrix.geodeisc}
	A_t = T_t A_0 T_t^{\top}, \quad  T_t = (1 - I)t + t T,
\end{equation}
and $T = A_0^{-\frac{1}{2}} \big(  A_0^{\frac{1}{2}} A_1  A_0^{\frac{1}{2}} \big)^{\frac{1}{2}} A_0^{-\frac{1}{2}} $ is the Jacobian matrix in \eqref{eqn:W.OT.map}. Replacing $T$ in \eqref{eqn:matrix.geodeisc} by $L_1L_0^{-1}$ (where $L_i$ is the Cholesky matrix of $A_i$) gives the geodesic in $(\mathscr{G}(N), \mathcal{KR}_2)$, and amounts to interpolating linearly the Cholesky matrices: $L_t = (1 - t)L_0 + tL_1$.

\begin{proposition} \label{prop:ABW.geodesics}
	Let $A_0, A_1 \in \mathscr{S}_{++}(N)$. For $0 \leq t \leq 1$, Define $A_t$ by \eqref{eqn:matrix.geodeisc} where $T = L_1 P L_0^{-1}$ as in \eqref{eqn:AW.OT.map}, $P$ satisfies \eqref{eqn:rho.sign} and $\rho_{t} \in \{-1, +1\}$ when $(L_0^{\top} L_1)_{tt} = 0$. Then $\mu_t = \mathcal{N}(a_t, A_t)$, where $a_t = (1 - t)a_0 + ta_1$, is a geodesic in $(\mathscr{G}(N), \mathcal{AW}_2)$. 
\end{proposition}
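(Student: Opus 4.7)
The plan is to prove the chord identity $\mathcal{AW}_2(\mu_s, \mu_t) = |s - t|\, \mathcal{AW}_2(\mu_0, \mu_1)$ for all $s, t \in [0, 1]$ by establishing a matching upper bound and closing via the triangle inequality. As a preparation I would rewrite the endpoint distance: by Theorem \ref{thm:main} together with $P^2 = I$ and the identity $\Tr(L_1 P L_0^{\top}) = \sum_j \rho_j (L_0^{\top} L_1)_{jj} = \|\diag(L_0^{\top} L_1)\|_1$ (forced by the sign condition \eqref{eqn:rho.sign} on $P$), a direct trace computation gives
\begin{equation*}
\mathcal{AW}_2^2(\mu_0, \mu_1) = \|a_0 - a_1\|^2 + \|L_0 - L_1 P\|_{\mathrm{F}}^2.
\end{equation*}

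Next I would exhibit a candidate bicausal coupling. Set $K_u := (1 - u) L_0 + u L_1 P$, a lower triangular matrix satisfying $K_u = T_u L_0$ and hence $K_u K_u^{\top} = A_u$. For $0 \leq s \leq t \leq 1$ and $\epsilon \sim \mathcal{N}(0, I)$ define $(X, Y) := (a_s + K_s \epsilon,\, a_t + K_t \epsilon)$; the marginals are $\mu_s$ and $\mu_t$ by design, and expanding $Y - X = (t - s)(a_1 - a_0) + (t - s)(L_1 P - L_0)\epsilon$ yields
\[
\bE\|X - Y\|^2 = (t - s)^2 \big(\|a_0 - a_1\|^2 + \|L_0 - L_1 P\|_{\mathrm{F}}^2\big) = (t - s)^2 \mathcal{AW}_2^2(\mu_0, \mu_1).
\]
For bicausality, when the diagonal entries of $K_s$ and $K_t$ are all nonzero I would factor $K_u = L_u D_u$, where $L_u \in \mathscr{L}_{++}(N)$ is the Cholesky factor of $A_u$ and $D_u$ is the diagonal $\pm 1$ matrix with $(D_u)_{jj} = \sign((K_u)_{jj})$. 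Then $(D_s \epsilon, D_t \epsilon)$ has exactly the joint Gaussian form \eqref{eqn:correlated.noise} with diagonal correlation matrix $D_s D_t$, so Lemma \ref{lem:correlated.coupling} gives bicausality of the coupling.

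Combining these steps yields $\mathcal{AW}_2(\mu_s, \mu_t) \leq (t - s)\, \mathcal{AW}_2(\mu_0, \mu_1)$ on a generic set of $0 \leq s \leq t \leq 1$, and summing three instances of this bound along $0 \leq s \leq t \leq 1$ with the triangle inequality $\mathcal{AW}_2(\mu_0, \mu_1) \leq \mathcal{AW}_2(\mu_0, \mu_s) + \mathcal{AW}_2(\mu_s, \mu_t) + \mathcal{AW}_2(\mu_t, \mu_1)$ forces equality in each term. The main obstacle I foresee is the exceptional set of intermediate times where the factorization $K_u = L_u D_u$ breaks down: the diagonal $(K_u)_{jj} = (1 - u)(L_0)_{jj} + u \rho_j (L_1)_{jj}$ vanishes at $u_j = (L_0)_{jj}/((L_0)_{jj} + (L_1)_{jj})$ whenever $\rho_j = -1$, giving up to $N$ singular intermediate covariances where $\mu_u$ is degenerate and Theorem \ref{thm:main} does not directly apply. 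I would close these cases by a perturbation argument: approximate $(L_0, L_1)$ within $\mathscr{L}_{++}(N)$ by pairs sharing the same sign pattern of $\diag(L_0^{\top} L_1)$ but shifting the exceptional times off any finite set of interest, verify the identity for the perturbed curves on a dense set of $(s, t) \in [0,1]^2$, and pass to the limit using the continuous dependence of $\mathcal{AW}_2$ on the Gaussian parameters supplied by the closed-form expression \eqref{eqn:AW.Gaussian}.
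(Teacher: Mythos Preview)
Your overall strategy coincides with the paper's: build the coupling $(a_s + K_s\epsilon,\, a_t + K_t\epsilon)$ with $K_u = (1-u)L_0 + uL_1P$, verify bicausality, read off the cost $(t-s)^2\mathcal{AW}_2^2(\mu_0,\mu_1)$, and close via the triangle inequality. The paper handles bicausality in one line by asserting that $T_t$ is ``lower triangular and invertible'' for every $t$; you are right that invertibility fails at the finitely many $u_j \in (0,1)$ where a diagonal entry of $K_u$ vanishes (exactly when some $\rho_j = -1$), and you are more careful than the paper in flagging this.

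Your perturbation fix, however, cannot close the gap. The final step invokes ``continuous dependence of $\mathcal{AW}_2$ on the Gaussian parameters supplied by \eqref{eqn:AW.Gaussian}'', but \eqref{eqn:AW.Gaussian} is valid only for \emph{non-degenerate} Gaussians, and $\mathcal{AW}_2$ is genuinely discontinuous at degenerate limits --- this is precisely the phenomenon behind Proposition~\ref{prop:incomplete}. In fact the chord identity itself breaks at the exceptional times. In Example~\ref{eg:compare} one has $K_{1/2} = \bigl(\begin{smallmatrix}0&0\\2&1\end{smallmatrix}\bigr)$, so $\mu_{1/2} = \mathcal{N}_2(0,\diag(0,5))$; a direct DPP computation (using $(\mu_0)_2(\cdot\,|\,x_1) = \mathcal{N}(2x_1,1)$ and $(\mu_{1/2})_2(\cdot\,|\,0) = \mathcal{N}(0,5)$) gives $\mathcal{AW}_2^2(\mu_0,\mu_{1/2}) = 11 - 2\sqrt{5}\approx 6.53$, whereas the geodesic property would demand $\bigl(\tfrac12\,\mathcal{AW}_2(\mu_0,\mu_1)\bigr)^2 = 1$. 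The candidate coupling $(L_0\epsilon,\,K_{1/2}\epsilon)$ does have cost $1$, but it is \emph{not} bicausal: since $(X_{1/2})_1 \equiv 0$, causality from $\mu_{1/2}$ to $\mu_0$ forces $(X_0)_1 = \epsilon_1$ to be independent of $(X_{1/2})_2 = 2\epsilon_1 + \epsilon_2$, which fails. So the obstacle you identify is real and cannot be removed by any limiting argument; both your proof and the paper's are complete only on $[0,1]\setminus\{u_j : \rho_j = -1\}$.
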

\begin{proof}
	On some probability space, let $X_0 \sim \mathcal{N}(a_0, A_0)$. For $0 \leq t \leq 1$, let $X_t = a_t + T_t(X_0 - a_0)$. Since $T_t$ is lower triangular and invertible, $(X_s, X_t)$ is a bicausal coupling of $(\mu_s, \mu_t)$ for $0 \leq s, t \leq 1$. From Corollary \ref{cor:optimal.couplings}, $(X_0, X_1)$ is $\mathcal{AW}_2$-optimal for $(\mu_0, \mu_1)$. From \eqref{eqn:matrix.geodeisc} we have, for $0 \leq s, t \leq 1$,
	\[
	\mathcal{AW}_2(\mu_s, \mu_t) \leq \mathbb{E}[\|X_s - X_t\|^2]^{\frac{1}{2}} = |s - t| \mathbb{E}[\|X_0 - X_1\|^2]^{\frac{1}{2}} = |s - t| \mathcal{AW}_2(\mu_0, \mu_1).
	\]
	Since $s$ and $t$ are arbitrary, the triangle inequality implies that equality holds.
\end{proof}

We note that in Proposition \ref{prop:ABW.geodesics} the $\mathcal{AW}_2$-geodesic is not necessarily unique when $L_0^{\top}L_1$ contains a zero entry. In fact, when $L_0^{\top}L_1$ contains a negative entry the interpolant $A_t$ may become degenerate, see Example \ref{eg:compare}.

It is well-known that the Bures-Wasserstein distance $d_{\mathrm{BW}}$ is a Riemannian distance on $\mathscr{S}_{++}(N)$ \cite{T11}; the Riemannian metric is Otto's metric \cite{O01} restricted to the space of centered Gaussian distributions. From Corollary \ref{cor:KR.geometry}, the Knothe-Rosenblatt distance $d_{\mathrm{KR}}$ is Euclidean when expressed in terms of the Cholesky matrix. From \eqref{eqn:ABW.Gaussian} and \eqref{eqn:d.KR}, $d_{\mathrm{ABW}}$ and $d_{\mathrm{KR}}$ are related by
\begin{equation} \label{eqn:ABW.expression2}
	d_{\mathrm{ABW}}^2(A, B) = d_{\mathrm{KR}}^2(A, B) - 4 \sum_{t: (L^{\top}M)_{tt} < 0} |(L^{\top}M)_{tt}|.
\end{equation}

\begin{proposition}
	The adapted Bures-Wasserstein distance $d_{\mathrm{ABW}}$ is not a Riemannian distance on $\mathscr{S}_{++}(N)$.
\end{proposition}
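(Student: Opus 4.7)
The plan is to argue by contradiction: suppose $d_{\mathrm{ABW}}$ is the distance induced by some smooth Riemannian metric $g$ on $\mathscr{S}_{++}(N)$, and derive a contradiction by showing this would force $d_{\mathrm{ABW}} = d_{\mathrm{KR}}$ globally, which is refuted by \eqref{eqn:ABW.expression2}. The key lemma I would establish first is that $d_{\mathrm{ABW}}$ and $d_{\mathrm{KR}}$ coincide on a full neighborhood of the diagonal of $\mathscr{S}_{++}(N) \times \mathscr{S}_{++}(N)$. Indeed, for $A = LL^{\top}$ fixed and $B = MM^{\top}$ varying, the diagonal entries of $L^{\top}M$ depend continuously on $M$ and are strictly positive at $M = L$, so they remain positive for $B$ in some open neighborhood of $A$; in that regime $\|\diag(L^{\top}M)\|_1 = \Tr(L^{\top}M)$ and comparing \eqref{eqn:ABW.Gaussian} with \eqref{eqn:d.KR} gives $d_{\mathrm{ABW}}^2(A, B) = d_{\mathrm{KR}}^2(A, B)$.

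Next I would invoke the standard Riemannian metric-recovery identity
\[
\sqrt{g_A(V, V)} = \lim_{t \to 0^+} \frac{d_{\mathrm{ABW}}(A, A + tV)}{t},
\]
applied to an arbitrary symmetric $V$. By the local coincidence above, for small $t$ the right-hand side equals $\lim t^{-1} d_{\mathrm{KR}}(A, A + tV) = \sqrt{g_{\mathrm{KR},A}(V, V)}$, where $g_{\mathrm{KR}}$ is the Riemannian metric obtained by pulling back the Euclidean metric on $\mathscr{L}_{++}(N)$ via the Cholesky diffeomorphism (a genuinely Riemannian structure, by Corollary \ref{cor:KR.geometry}). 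This forces $g = g_{\mathrm{KR}}$ pointwise on all of $\mathscr{S}_{++}(N)$. Since a Riemannian distance is uniquely determined by its metric tensor (as the infimum of $g$-lengths of connecting curves), I would conclude $d_{\mathrm{ABW}} = d_{\mathrm{KR}}$ as functions on $\mathscr{S}_{++}(N) \times \mathscr{S}_{++}(N)$.

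To close the contradiction, it suffices to exhibit $A, B \in \mathscr{S}_{++}(N)$ with $d_{\mathrm{ABW}}(A, B) < d_{\mathrm{KR}}(A, B)$; by \eqref{eqn:ABW.expression2}, any Cholesky factors with $(L^{\top} M)_{tt} < 0$ for some $t$ will do. For instance, in $N = 2$ I would take $L = \begin{pmatrix} 1 & 0 \\ c & 1 \end{pmatrix}$ and $M = \begin{pmatrix} 1 & 0 \\ -c & 1 \end{pmatrix}$ with $|c| > 1$, so that $(L^{\top} M)_{11} = 1 - c^2 < 0$ and \eqref{eqn:ABW.expression2} yields $d_{\mathrm{ABW}}^2(A, B) = d_{\mathrm{KR}}^2(A, B) - 4(c^2 - 1)$. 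The main obstacle is the metric-recovery step; while classical, it merits an explicit appeal, and the crucial point is that $d_{\mathrm{ABW}}$ agrees with the honest Riemannian distance $d_{\mathrm{KR}}$ on a full Euclidean neighborhood of each diagonal point, so the usual first-order length-minimizing argument (short smooth curves realize $\sqrt{g_A(V,V)}$ at leading order) applies verbatim.
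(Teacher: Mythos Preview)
Your proposal is correct and follows essentially the same route as the paper: local coincidence of $d_{\mathrm{ABW}}$ with $d_{\mathrm{KR}}$ near the diagonal (Corollary~\ref{cor:KR.optimal}) forces any candidate Riemannian metric to agree with the pullback of the Euclidean metric under the Cholesky map, whence $d_{\mathrm{ABW}}\equiv d_{\mathrm{KR}}$, contradicting \eqref{eqn:ABW.expression2}. The paper's version is terser, but you have merely made explicit the metric-recovery and uniqueness steps it leaves implicit.
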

\begin{proof}
	We argue by contradiction. Suppose $d_{\mathrm{ABW}}$ is distance induced by a Riemannian metric on $\mathscr{S}_{++}(N)$. The metric tensor at $A \in \mathscr{S}_{++}(N)$ can be recovered by the distances $d_{\mathrm{ABW}}(A, B)$ where $B$ ranges over a neighborhood of $A$ \cite[Example 9]{WY22}. From Corollary \ref{cor:KR.optimal}, we have $d_{\mathrm{ABW}}(A, B) = d_{\mathrm{KR}}(A, B)$ when $A$ and $B$ are sufficiently close. This implies that the metric tensor must be that of $d_{\mathrm{KR}}$ (which is Euclidean under the Cholesky coordinate system $A \mapsto L$). This is a contradiction since $d_{\mathrm{ABW}} \not\equiv d_{\mathrm{KR}}$ from \eqref{eqn:ABW.expression2}.
\end{proof}

\section{Examples} \label{sec:examples}

In the following examples we let $\mu = \mathcal{N}(0, A)$ and $\nu = \mathcal{N}(0, B)$ be non-degenerate.

\begin{example}
	Suppose $A$ is diagonal. Then $L = A^{\frac{1}{2}}$ is also diagonal. For any $B$, we have $(L^{\top}M)_{tt} = L_{tt} M_{tt} > 0$ for all $t$. By Corollary \ref{cor:KR.optimal}, the Knothe-Rosenblat coupling is $\mathcal{AW}_2$-optimal. This is a special case of \cite[Theorem 2.9]{BBYZ2017}.
\end{example}

\begin{example} \label{eg:compare}
	Consider
	\begin{equation} \label{eqn:compare}
		A = \begin{bmatrix} 1 & 2 \\ 2 & 5 \end{bmatrix} \quad \text{and} \quad B = \begin{bmatrix} 1 & -2 \\ -2 & 5 \end{bmatrix}.
	\end{equation}
	The Wasserstein transport $Y = T_{\mathrm{W}}^{\mu, \nu} X$ is visualized in the top row of Figure \ref{fig:compare}. Since $\diag(L^{\top}M) = (-3, 1)$ contains a negative entry, the Knothe-Rosenblatt coupling is not $\mathcal{AW}_2$-optimal. The second row of Figure  \ref{fig:compare} visualizes the transports $Y =  T_{\mathrm{KR}}^{\mu, \nu}X$ and  $Y = T_{\mathrm{AW}}^{\mu, \nu} X$ (the later is unique and deterministic by Corollary \ref{cor:optimal.couplings}). Here we have $\mathcal{W}_2(\mu, \nu) \approx 1.75$, $\mathcal{KR}_2(\mu, \nu) = 4$ and $\mathcal{AW}_2(\mu, \nu) = 2$ (which is consistent with \eqref{eqn:ABW.expression2}). Note that when $N = 2$ all distributions are Markovian. So the Knothe-Rosenblatt coupling is not necessarily optimal even for Gauss-Markov processes. We also visualize, in Figure \ref{fig:interpolation}, the interpolations \eqref{eqn:matrix.geodeisc} based on $T_{\mathrm{W}}^{\mu, \nu}$, $T_{\mathrm{KR}}^{\mu, \nu}$ and $T_{\mathrm{AW}}^{\mu, \nu}$.

	\begin{figure}[t!]
		\centering
		\includegraphics[scale = 0.35]{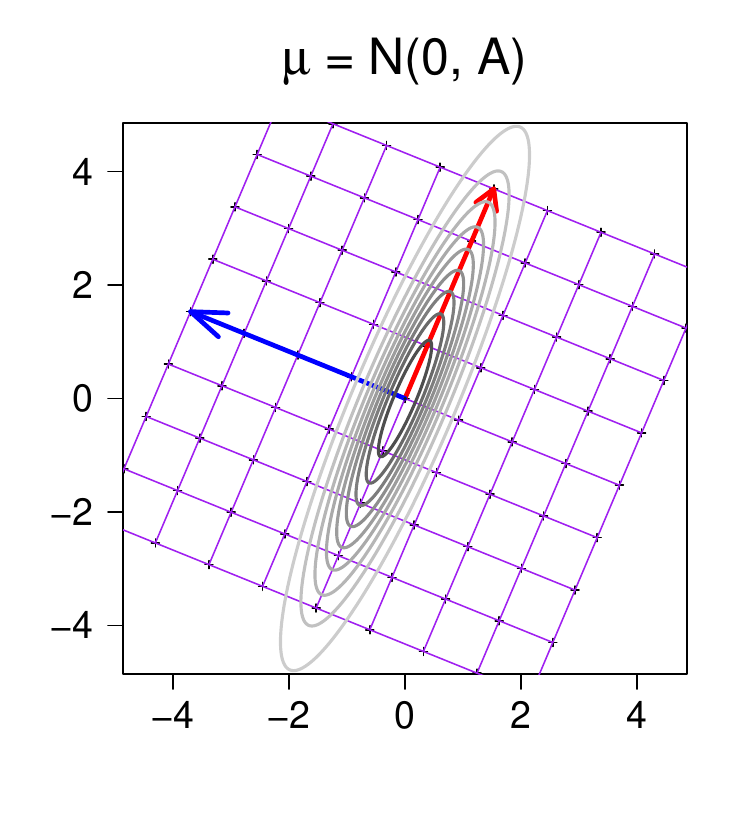}
		\includegraphics[scale = 0.35]{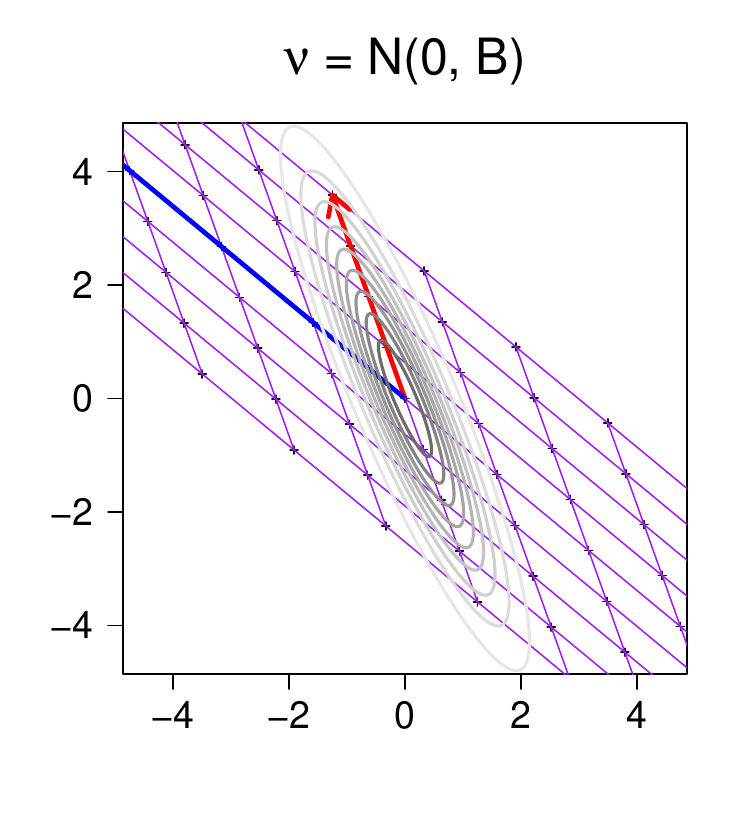}
		\includegraphics[scale = 0.35]{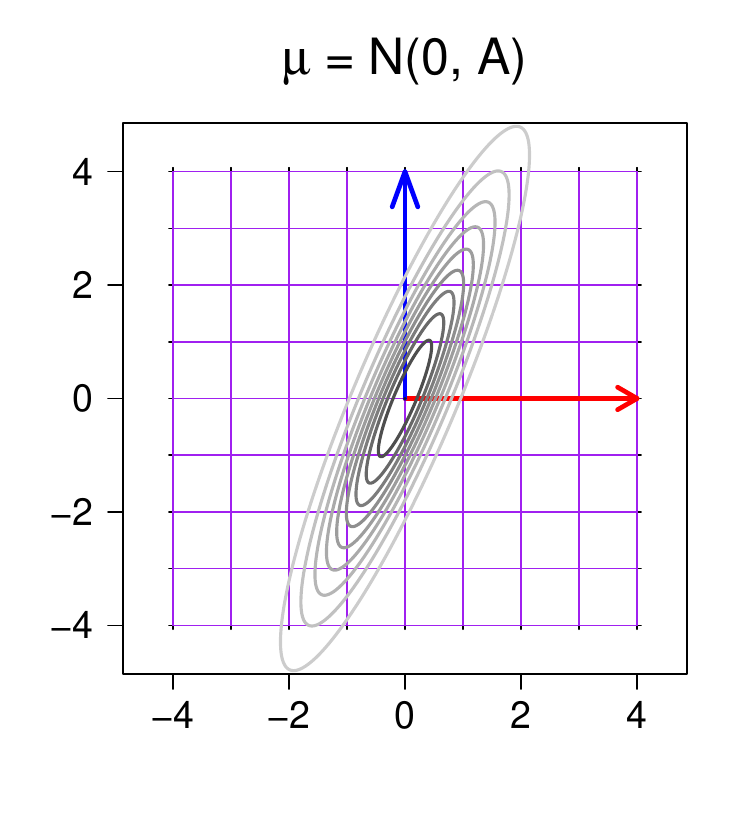}
		\hspace{-0.62cm}
		\includegraphics[scale = 0.35]{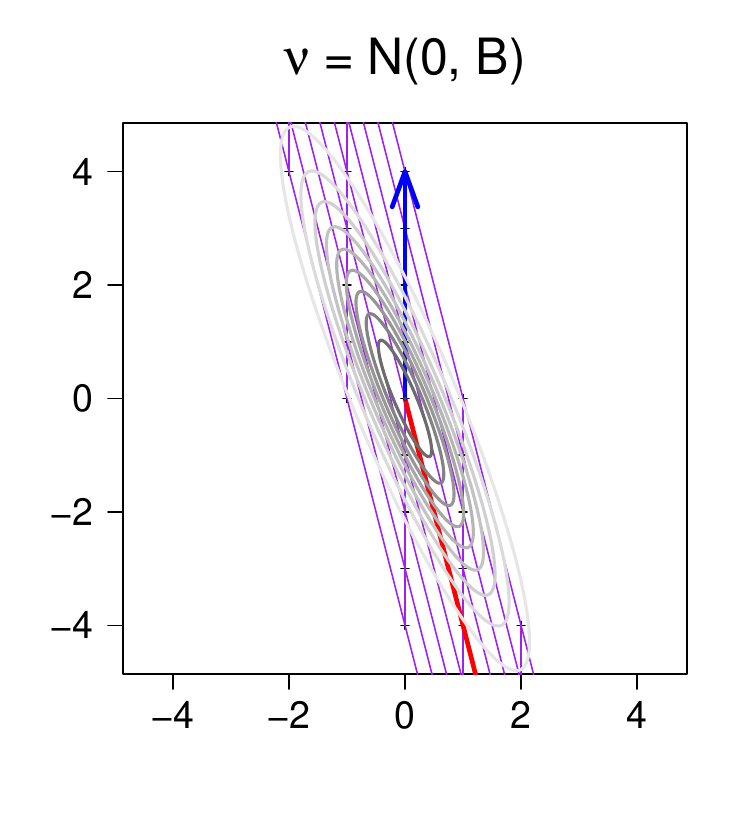}
		\hspace{-0.62cm}
		\includegraphics[scale = 0.35]{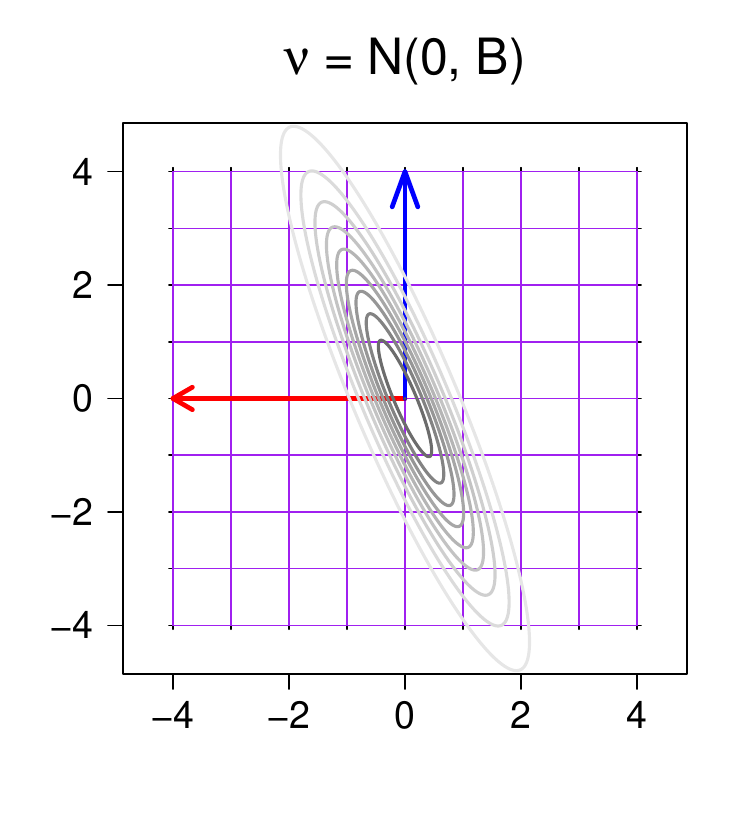}
		\vspace{-0.5cm}
		\caption{Transports between $\mu = \mathcal{N}(0, A)$ and $\nu = \mathcal{N}(0, B)$ where $A$ and $B$ are given by \eqref{eqn:compare}. Each distribution is visualized by the elliptical contours of its density. Top row: The $\mathcal{W}_2$-transport. Here the arrows and the grid correspond to the eigenvectors of $A$. The image of the grid under $T_{\mathrm{W}}^{\mu, \nu}$ is shown on the right. Second row: Now the arrows and the grid correspond to the standard Euclidean basis $\{\mathbf{e}_1, \mathbf{e}_2\}$ in the source space. The middle plot shows the transport under $T_{\mathrm{KR}}^{\mu, \nu}$ and the right plot shows the transport under $T_{\mathrm{AW}}^{\mu, \nu}$. Since $T_{\mathrm{KR}}^{\mu, \nu}$ is comonotonic at each time, $T_{\mathrm{KR}}^{\mu, \nu} \mathbf{e}_1$ points towards the right and $T_{\mathrm{KR}}^{\mu, \nu}\mathbf{e}_2$ points upward. Here only $T_{\mathrm{ABW}}^{\mu, \nu}\mathbf{e}_2$ points upward.} \label{fig:compare}
	\end{figure}
	
	\begin{figure}[t!]
		\centering
		\includegraphics[scale = 0.42]{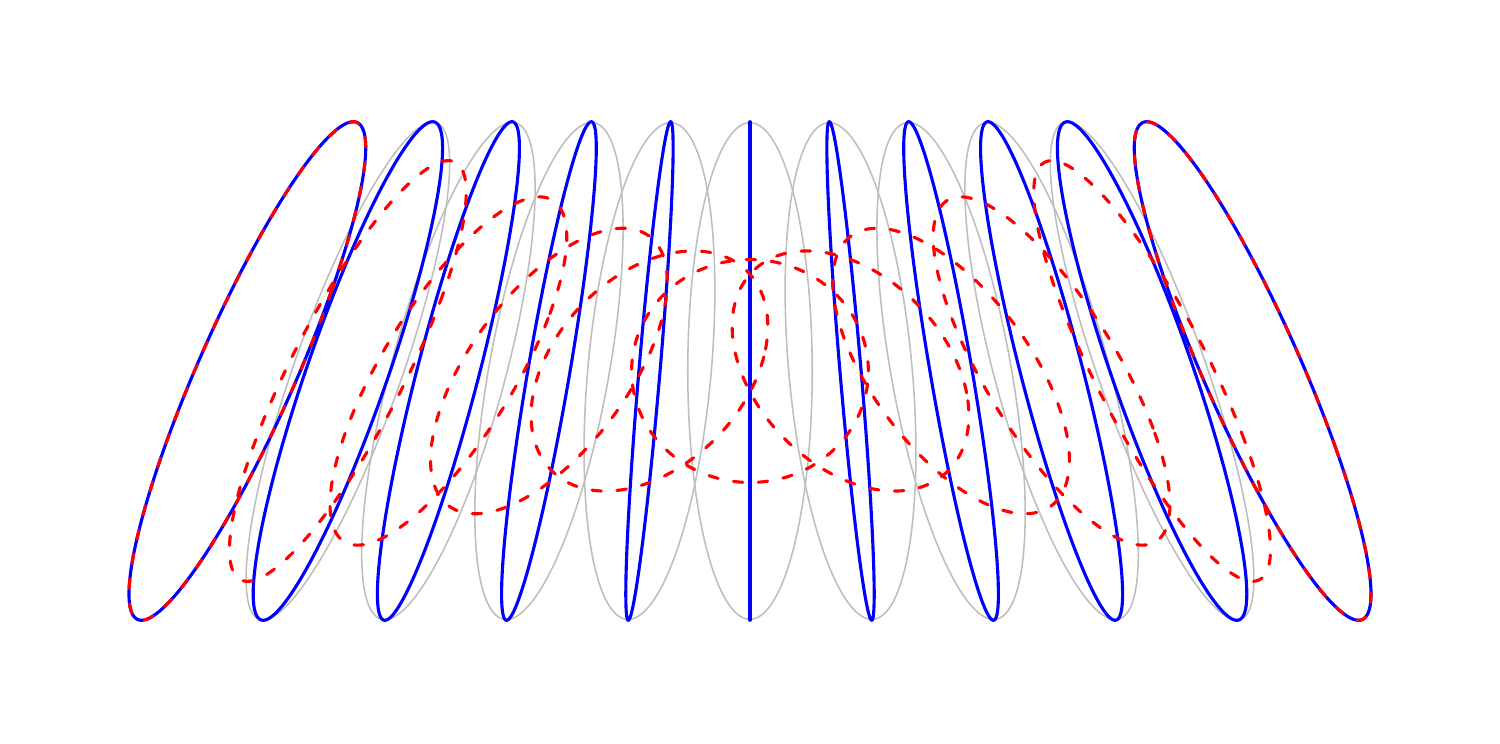}
		\vspace{-0.6cm}
		\caption{Three interpolations $(A_t)_{0 \leq t \leq 1}$ between $A_0 = A$ and $A_1 = B$ as in \eqref{eqn:compare}. For visualization purposes we also include a uniform translation. The thin (grey) ellipses visualize McCann's displacement interpolation. The thick dashed (red) ones show the Knothe-Rosenblatt interpolation where $T = L_1 L_0^{-1}$. The thick solid (blue) ones show the interpolation with $T = L_1 P L_0^{-1}$; in this case  $A_{\frac{1}{2}}$ is degenerate.} \label{fig:interpolation}
	\end{figure}
\end{example}

Next we give an example where the optimal $\mathcal{AW}_2$-coupling is not unique.

\begin{example} \label{eg:nonunique}
	Consider
	\begin{equation} \label{eqn:nonunique}
		L = \begin{bmatrix} 1 & 0 \\ 1 & 1 \end{bmatrix} \Rightarrow A = \begin{bmatrix} 1 & 1 \\ 1 & 2 \end{bmatrix} \quad \text{and} \quad M = \begin{bmatrix} 1 & 0 \\ -1 & 1 \end{bmatrix} \Rightarrow  B = \begin{bmatrix} 1 & -1 \\ -1 & 2 \end{bmatrix}.
	\end{equation}
	Then $\diag(L^{\top}M) = (0, 1)$. Thus any optimal $\mathcal{AW}_2$-coupling is comonotonic at time $2$ but is otherwise unrestricted. To see intuitively why, use \eqref{eqn:nonunique} to write 
	\begin{equation} \label{eqn:last.example}
		X_2 = X_1 + \epsilon_2, \quad Y_2 = -Y_1 + \epsilon_2,
	\end{equation}
	where $\epsilon_2 = \epsilon_2^X = \epsilon_2^Y \sim \mathcal{N}(0, 1)$ is the common noise at time $2$. Since
	\[
	\|X - Y\|^2 = (X_1 - Y_1)^2 + (X_2 - Y_2)^2 = (X_1 - Y_1)^2 + (X_1 + Y_1)^2,
	\]
	the cross terms cancel out. Thus the transport cost does not depend on the coupling at time $1$. 
	
	Since $\diag(L^{\top}M)$ has non-negative entries, the Knothe-Rosenblatt coupling is optimal. From \eqref{eqn:last.example}, we see that $\mu$ is stochastically increasing but $\nu$ is stochastically decreasing, so $\mu$ and $\nu$ are not stochastically co-monotone (we refer the reader to \cite[Definition 3.3]{BKR22} for definitions). In particular, stochastic co-monotonicity is not necessary for the Knothe-Rosenblatt coupling to be optimal, even if we restrict to Gauss-Markov processes.        
\end{example}

\section{Conclusion} \label{sec:conclusion}
In this paper we derived explicitly the adapted Wasserstein distance between non-degenerate Gaussian distributions and characterized the optimal couplings. We also studied some properties of the resulting adapted Bures-Wasserstein distance between covariance matrices. Our results, as well as the extensions in \cite{AHP24}, suggest many directions for future research. Apart from a deeper study of the adapted Bures-Wasserstein distance (including the degenerate case), we may consider the {\it adapted Wasserstein barycenter} introduced in \cite{AKP24}. Motivated by the Wasserstein setting \cite{AC11}, it seems natural to conjecture that when the marginals are Gaussian, there exists an adapted Wasserstein barycenter which is Gaussian. It is also of interest to use the explicit formula of $d_{\mathrm{ABW}}$ to study, in the spirit of \cite{H23}, distributionally robust optimization problems, as well as filtering, projection and stochastic control under adapted Wasserstein loss. Finally, our techniques may be useful for studying the adapted Wasserstein distance between continuous-time Gaussian processes which are not necessarily semimartingales.

\section*{Acknowledgement}
The research of T.-K.~L.~Wong is partially supported by an NSERC Discovery Grant (RGPIN-2019-04419). We presented some preliminary results at the workshop Optimal Transport and Distributional Robustness hosted at the Banff International Research Station. We thank the organizers and participants for their helpful comments.

\bibliographystyle{abbrv}
\bibliography{transport.bib}
\end{document}